\renewcommand{\ge}{\geqslant}
\renewcommand{\le}{\leqslant}
\theoremstyle{plain}
\newtheorem{theorem}{Theorem}[section]
\newtheorem{lemma}[theorem]{Lemma}
\newtheorem{prop}[theorem]{Proposition}
\newtheorem{remark}[theorem]{Remark}
\newtheorem{corollary}[theorem]{Corollary}
\numberwithin{equation}{section}
\newcommand\R{\mathbb R} 
\newcommand\N{\mathbb N} 
\renewcommand\S{\mathbb S} 
\newcommand\e{\varepsilon}
\renewcommand{\epsilon}{\e}
\tikzstyle arrowstyle=[scale=1]
\tikzstyle directed=[postaction={decorate,decoration={markings,
    mark=at position .65 with {\arrow[arrowstyle]{stealth}}}}]
\tikzstyle reverse directed=[postaction={decorate,decoration={markings,
    mark=at position .65 with {\arrowreversed[arrowstyle]{stealth};}}}]
\thanks{2020 Mathematics Subject Classification: 35R35, 35A02, 35B08, 35C05.\\ Keywords: Free boundary problems in the plane, classification results, partial and ordinary differential equations.
\\
It is a pleasure to thank Xavier Ros-Oton for several interesting comments
on a preliminary version of this paper.\\
SD was supported by the Australian Research Council DECRA DE180100957 {\em PDEs, free boundaries and applications}.
AK was supported by the EPSRC grant 
EP/S03157X/1 {\em Mean curvature measure of free boundary}.
EV was supported by the Australian Laureate Fellowship FL190100081 {\em Minimal surfaces, free boundaries and
partial differential equations}.}
\begin{document}

\title[Global solutions of a free boundary problem]{Classification of global solutions \\ of a free boundary problem in the plane}

\author{Serena Dipierro}
\address{Serena Dipierro: Department of Mathematics
and Statistics,
University of Western Australia,
35 Stirling Hwy, Crawley WA 6009, Australia}
\email{serena.dipierro@uwa.edu.au}

\author{Aram Karakhanyan}
\address{Aram Karakhanyan:
School of Mathematics, The University of Edinburgh,
Peter Tait Guthrie Road, EH9 3FD Edinburgh, UK}
\email{aram.karakhanyan@ed.ac.uk}

\author{Enrico Valdinoci}
\address{Enrico Valdinoci:
Department of Mathematics
and Statistics,
University of Western Australia,
35 Stirling Hwy, Crawley WA 6009, Australia}
\email{enrico.valdinoci@uwa.edu.au}

\begin{abstract}
We classify nontrivial, nonnegative, positively homogeneous solutions of the equation
\begin{equation*}
\Delta u=\gamma u^{\gamma-1}
\end{equation*}
in the plane.

The problem is motivated by the analysis of the classical 
Alt-Phillips free boundary problem, but considered here with negative exponents~$\gamma$.

The proof relies on several bespoke results for ordinary differential equations.
\end{abstract}

\date{\hbox{\today}}
\maketitle

\setcounter{tocdepth}{2}
\tableofcontents

\section{Introduction}

Several problems of interest in the calculus of variations can be reduced to the study of critical points
of an energy functional of the type
$$ \int \frac{|\nabla u|^2}{2} +F(u)\,$$
where, up to a normalization, $F(r)\ge0$ for all~$r\in\R$ and~$F(r)=0$ for all~$r\in(-\infty,0]$.

An archetypal example of the potential~$F$ is given by power-like functions such as
\begin{equation}\label{LEQF}
F(r):=r^\gamma\,\chi_{(0,+\infty)}(r),\end{equation}
for a given~$\gamma\in\R$. In this case, nonnegative critical points of the energy functional
formally correspond to solutions of the equation
\begin{equation}\label{LEQ} \Delta u=\gamma u^{\gamma-1}\end{equation}
in~$\{u>0\}$.

When~$\gamma\ge2$, we have that~$F\in C^{1,1}(\R)$ and the right hand side of~\eqref{LEQ} is Lipschitz continuous
in~$u$. In particular, in this case one can define~$c:=-\gamma u^{\gamma-2}$ and deduce that~$c$ is continuous
if so is~$u$: in this setting, the Strong Maximum Principle (see e.g. Theorem~1.7 in~\cite{MR2777537})
yields that nonnegative solutions of~\eqref{LEQ} are actually strictly positive inside the domain in which the equation
takes place.

When~$\gamma\in(0,2)$, the situation changes significantly: for instance, it is readily checked that
\begin{equation}\label{PKS93}u(x):=
\left(\frac{(2-\gamma)^2\,(x_n)_+^2}{2}
\right)^{\frac1{2-\gamma}}
\end{equation} is in this case Lipschitz continuous and, for every~$\phi\in C^\infty_0(B_1)$, the partial integration yields the identity
\begin{eqnarray*}&& \int_{B_1} \nabla u(x)\cdot\nabla\phi(x)+\gamma u^{\gamma-1}(x)\phi(x)\,dx
=\int_{B_1\cap\{x_n>0\}} \nabla u(x)\cdot\nabla\phi(x)+\gamma u^{\gamma-1}(x)\phi(x)\,dx\\&&\qquad=
\int_{B_1\cap\{x_n>0\}} 
\frac{(2-\gamma)^{\frac\gamma{2-\gamma}}}{2^{\frac{\gamma-1}{2-\gamma}}}
x_n^{\frac\gamma{2-\gamma}}
\,\partial_n\phi(x)+
\gamma \,\left(\frac{(2-\gamma)^2\,x_n^2}{2}
\right)^{\frac{\gamma-1}{2-\gamma}}
\,
\phi(x)\,dx\\&&\qquad=
\int_{B_1\cap\{x_n>0\}} \partial_n\left(
\frac{(2-\gamma)^{\frac\gamma{2-\gamma}}}{2^{\frac{\gamma-1}{2-\gamma}}}
x_n^{\frac\gamma{2-\gamma}}
\,\phi(x)\right)\,dx=0,
\end{eqnarray*}
providing an example of weak solution\footnote{We stress that, at this level, the solution considered in~\eqref{PKS93} is a weak solution. The setting will be different in the forthcoming
Theorems~\ref{APPDE-TH-BIS}
and~\ref{0o-2r-rtegripoHSdYjmsd-003PsKAM-l}, where classical solutions will be
taken into account, without integrability assumptions (the function
in~\eqref{PKS93} will however appear in~\eqref{BIS-HAN-poss-22}).} of~\eqref{LEQ} with a vanishing point (actually, a vanishing region)
in the interior of the domain.

For this reason, equation~\eqref{LEQ} when~$\gamma\in(0,2)$
has been widely investigated in the context of free boundary problems
and it is indeed the main topic of a classical article by
H. W. Alt and D. Phillips, see~\cite{MR850615}.

\begin{figure}[h] 
\includegraphics[width=0.5\textwidth]{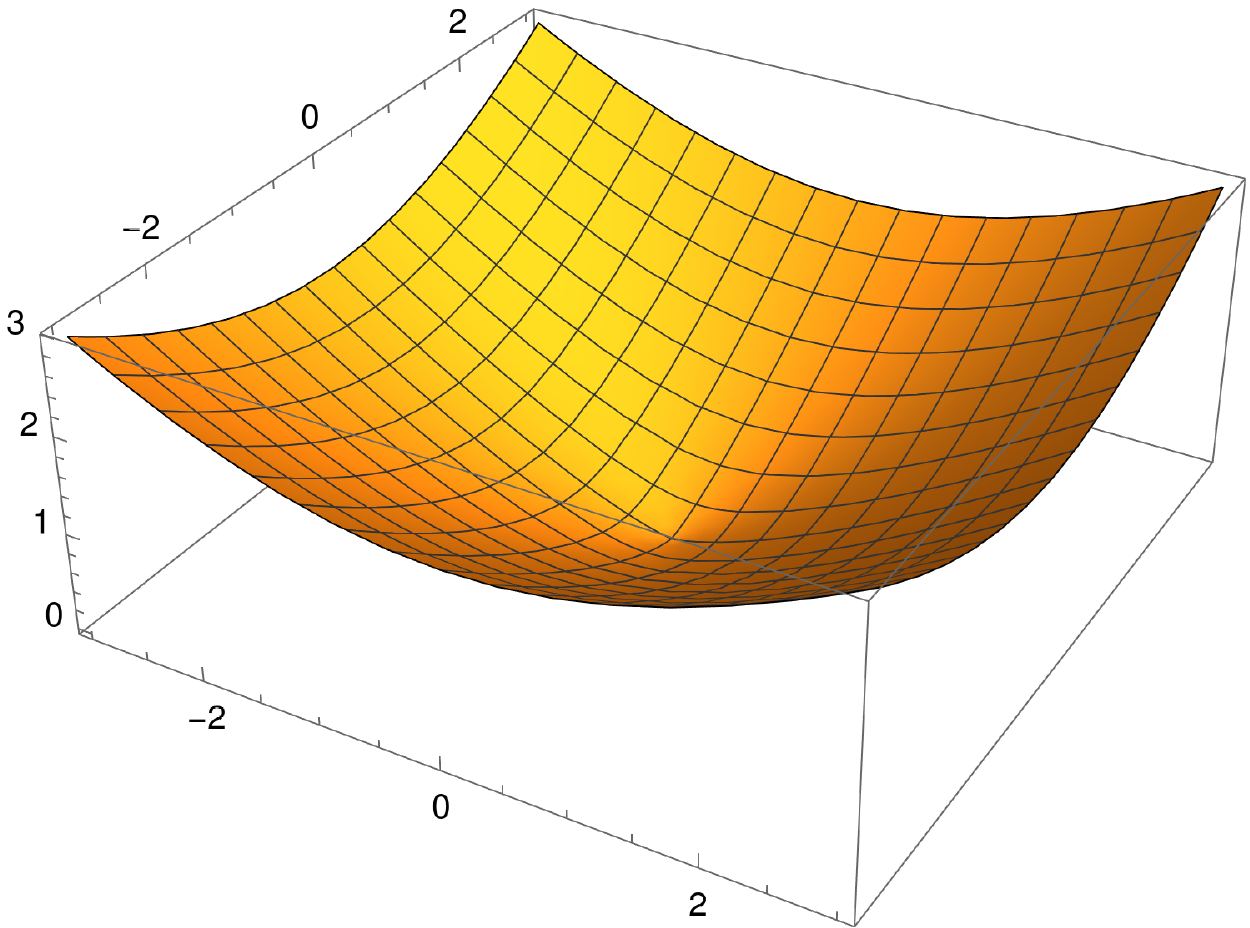} 
\caption{\footnotesize\sl The solution in \eqref{BIS-HAN-poss-1} with~$a:=4/3$.}
\label{FIG1}\end{figure}

{F}rom the point of view of applications,
equation~\eqref{LEQ} also models a reaction-diffusion problem
of a gas distribution in a porous catalyst pellet (see e.g.~\cite{zbMATH03494100}). 
To understand the regularity of the minimizers of
the associated energy functional and the way in which
the free boundary separates the zero set of the solution
from the positive region, one of the main tools relies on the blow-up
analysis of the problem, as well as
on the understanding of the corresponding homogeneous
solutions, see e.g. Sections~1.15 and~1.16 in~\cite{MR850615}
(see also Theorem~5.1 in~\cite{MR729195}
for the range~$\gamma\in(1,2)$).

The case~$\gamma=1$ in~\eqref{LEQF} corresponds to an obstacle problem
and is covered by
the classical work in~\cite{MR454350}.
Similarly, the case~$\gamma=0$ in~\eqref{LEQF} produces the seminal case studied in~\cite{MR618549}.
The case~$\gamma\in(0,1)$ has also been considered in~\cite{MR3980852}.

The case of negative
exponents~$\gamma$ appears to have been studied much less in the literature.
Once we have completed this paper,
the preprint~\cite{2022arXiv220307123D} has become available online, where the case~$\gamma\in(-2,0)$
has been taken into account (our perspective here is however quite different
from that in~\cite{2022arXiv220307123D}, since we do not focus our attention on the regularity of the local minimizers of the energy
functional but rather on classification results for global solutions, without energy constraints,
concentrating on the case of classical solutions).

The main goal of this paper is indeed to
consider all possible ranges of~$\gamma$, addressing particularly
the two-dimensional case.

Specifically, we focus on homogeneous solutions,
which play a special role in free boundary problems,
since this kind
of functions appear as limits of blow-ups and their classification
is thereby an essential ingredient towards a free boundary regularity theory.

A natural assumption for us, in view of the degree~$a$
of homogeneity
of the solution, is to consider the case in which~$u^{\frac1a}$
meets the zero set\footnote{For instance, in~\cite[Theorem~1.2]{MR3957397},
it is proved that when~$\gamma=1$,
under zero Dirichlet boundary data, the free boundary meets the fixed boundary in a~$C^1$ way and without density assumptions (and the result holds also in the fully nonlinear case).} in a
suitably regular fashion.
In this situation, as expected, one obtains
positive and rotationally invariant solutions, as well as
``one dimensional'' one phase solutions whose
positivity set is a halfplane. But, perhaps more surprisingly,
when~$a=1/2$ one also detects a ``resonance''
which produces new solutions whose positivity set
is a nontrivial cone (and even the union of different cones
whose opening is an acute angle).

\begin{figure}[h] 
\includegraphics[width=0.5\textwidth]{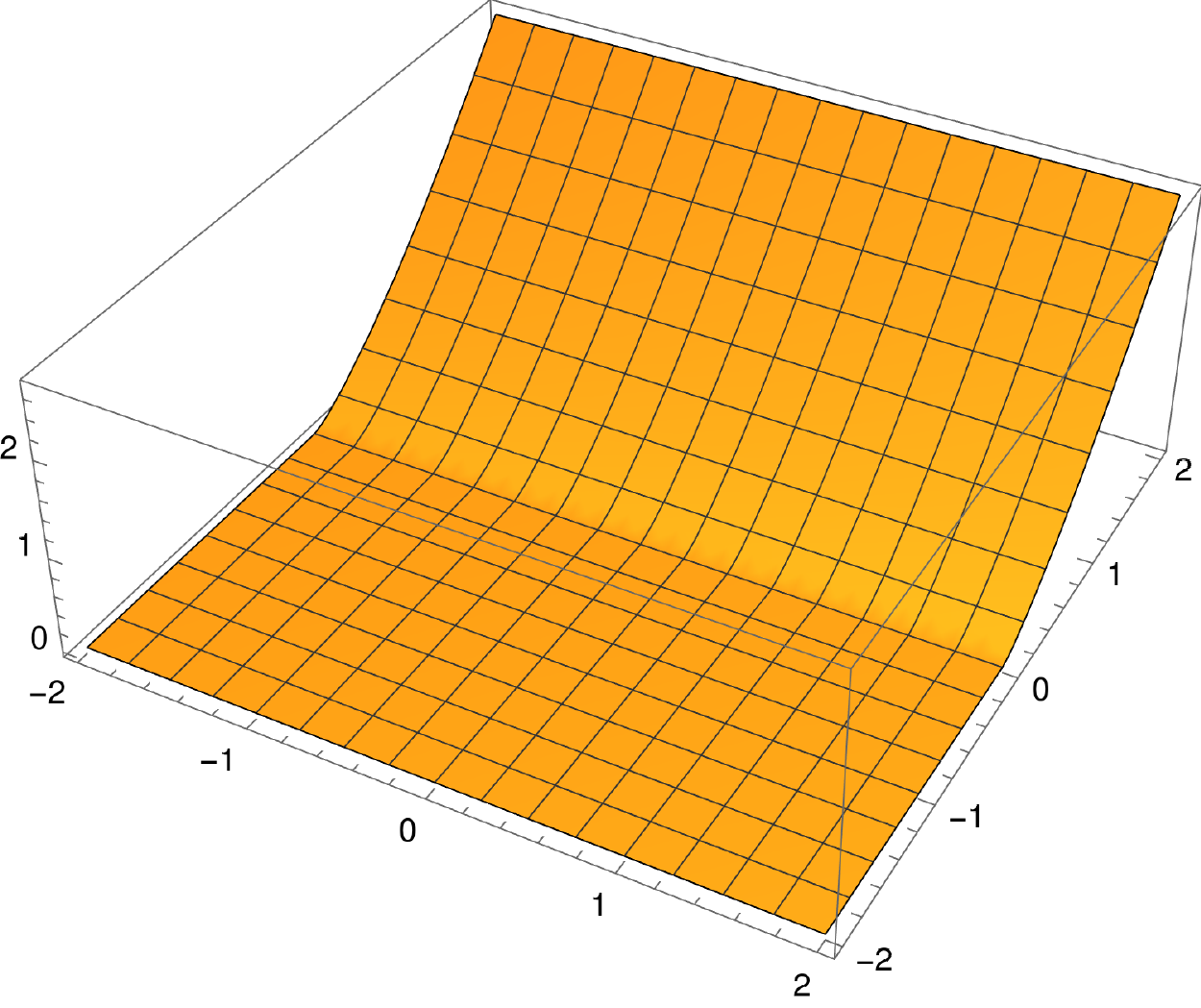} 
\caption{\footnotesize\sl The solution in \eqref{BIS-HAN-poss-22} with~$a:=4/3$.}
\label{FIG2}\end{figure}

The precise result that we have deals with classical solutions and is the following:

\begin{theorem}\label{APPDE-TH-BIS}
Let~$a>0$ and~$\gamma \ne0$. 
Assume that~$u\in C(\R^2)$ is a nontrivial, nonnegative, positively homogeneous solution of degree~$a$ of the equation
\begin{equation}\label{JAS:lkjhdf9tihyff8f8039875f}
\Delta u=\gamma u^{\gamma-1}\qquad{\mbox{in a connected component of }}\;\R^2\cap\{u>0\}.
\end{equation}

Then,
\begin{equation}
\label{BIS-EXP:LA} \gamma<2\qquad{\mbox{and}}\qquad
a=\frac{2}{2-\gamma}.
\end{equation}

{If~$a\ne\frac12$, suppose additionally\footnote{As customary, when~$\xi\in(0,+\infty)\setminus\N$, we can write~$\xi=\xi_1+\xi_2$, with~$\xi_1\in\N$ and~$\xi_2\in(0,1)$ and in this setting~$C^\xi$ is a short notation for~$C^{\xi_1,\xi_2}$,
that is having derivatives up to order~$\xi_1$, with the derivatives of order~$\xi_1$ satisfying
a H\"older condition with exponent~$\xi_2$.

{Notice that in condition~\eqref{LA23ureg}
a neighborhood of the origin is removed: the intuitive idea for it
is that, for a ``typical'' situation in the plane arising from homogeneous
solutions, the positivity set of the solution is given by some cone
and condition~\eqref{LA23ureg} aims at detecting the way in which the solution meets the free boundary at the regular points
(and not at the origin, where the free boundary may display a singularity).

We also point out that the equation~$\Delta u=\eta u^{\gamma-1}$ for any~$\eta\in\R$ such that~$\eta\gamma\in(0,+\infty)$
can be reduced to~\eqref{JAS:lkjhdf9tihyff8f8039875f} by setting~$v:=\left(\frac\gamma\eta\right)^{\frac1{2-\gamma}}u$.}} that, for each
connected component~$S$ of~$(B_2\setminus B_{1/2})\cap\{u>0\}$,
\begin{equation}\label{LA23ureg}
{\mbox{$u^{\frac1a}\in C^{\xi}(\overline{S})$ for some }}\xi>\begin{cases}
3-2a, &{\mbox{ if }} a\in(0,1),\\
\displaystyle \frac1a, &{\mbox{ if }} a>1.\end{cases}\end{equation}
Then, only the following
possible, non-exclusive, scenarios can happen}:\smallskip

\noindent{\bf[i]} we have
\begin{equation}\label{02}
\gamma\in(0,2)
\end{equation}
and
\begin{equation}\label{BIS-HAN-poss-1}
u(x)=C_a\,|x|^a,\qquad{\mbox{with }}\quad C_a:=\frac{(2(a-1))^{\frac{a}2}}{a^{\frac{3a}2}},
\end{equation}
{\bf[ii]}
up to a rotation,
\begin{equation}\label{BIS-HAN-poss-22}
u(x)=\frac{2^{\frac{a}2}}{a^a}\, (x_2)_+^a,\end{equation}
{\bf[iii]} up to a rotation,
\begin{equation}\label{BIS-HAN-poss-1-NUOVANs}
u(x)=\frac{2^{\frac{a}2}}{a^a}\,|x_2|^a,
\end{equation}
{\bf[iv]} the following situation occurs:
\begin{itemize}
\item $a=1/2$,
\item given~$c\in\R\setminus\{0\}$, up to a rotation and a reflection, the positivity set of~$u$
contains the cone
\begin{equation}\label{CONP} {\mathcal{C}}_c:=\Big\{ (r\cos\theta,r\sin\theta),\quad
r>0\quad{\mbox{and}}\quad\theta\in (0,T_c)
\Big\},\end{equation}
with
\begin{equation}\label{TICCI} T_c:=\begin{cases}
2\pi-2\arctan(1/c) &{\mbox{ when~$c>0$}},
\\-2\arctan(1/c)&{\mbox{
when~$c<0$,}}
\end{cases}\end{equation}
\item $u=0$ on~$\partial{\mathcal{C}}_c$,
\item for every~$x\in{\mathcal{C}}_c$,
\begin{equation}\label{AKJH709876543jhgc45} u(x)={2^{\frac{3}4}}\,
\sqrt{
x_2-cx_1+c|x|}.\end{equation}
\end{itemize}
\end{theorem}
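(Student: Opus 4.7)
The plan is to exploit homogeneity by passing to polar coordinates, writing $u(r,\theta)=r^a v(\theta)$ so that $\Delta u = r^{a-2}(v''+a^2 v)$. Then \eqref{JAS:lkjhdf9tihyff8f8039875f} reduces, on each connected component of $\{v>0\}\subset [0,2\pi)$, to the ODE $v''+a^2 v=\gamma v^{\gamma-1}$, and matching the exponents of $r$ gives $a(\gamma-1)=a-2$, which combined with $a>0$ yields \eqref{BIS-EXP:LA}. Multiplying by $v'$ produces a first integral $E:=(v')^2+a^2 v^2-2v^\gamma$, constant on each arc of positivity.

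The key substitution, suggested by the regularity hypothesis, is $\phi:=v^{1/a}$, so that $u^{1/a}=r\,\phi(\theta)$. Under this change the ODE becomes
\[
\phi\phi''+(a-1)(\phi')^2+a\phi^2=\gamma/a,
\]
and the first integral reads $a^2[(\phi')^2+\phi^2]=2+E\,\phi^{2-2a}$. At an endpoint $\theta_0$ of a positivity arc, $\phi(\theta_0)=0$, and these identities force $(\phi'(\theta_0))^2=2/a^2$. For $a>1$ this requires $E=0$: otherwise $\phi$ would behave like $\tau^{1/a}$ near $\tau:=\theta-\theta_0=0$, violating \eqref{LA23ureg} since the exponent would be exactly $1/a$. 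For $a<1$ the boundary slope is already determined by the ODE itself (the $E\phi^{2-2a}$ term vanishes there), and the regularity hypothesis instead controls the higher-order expansion.

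Expanding $\phi(\theta)=(\sqrt{2}/a)\,\tau+b\,\tau^2+c\,\tau^3+\cdots$ near $\theta_0$ and substituting into the ODE, the $\tau$-coefficient forces
\[
(2a-1)\,b=0.
\]
This is the heart of the proof: either $a=1/2$ (\emph{resonance}, $b$ free) or $a\ne 1/2$ and $b=0$, in which case all remaining Taylor coefficients of $\phi$ are uniquely determined. In the non-resonant case, $\phi$ is the unique local solution with the prescribed boundary data, so it extends to the explicit $v(\theta)=(2^{a/2}/a^a)\sin^a(\theta-\theta_0)$, whose arc has length exactly $\pi$. According to whether the positivity set consists of zero, one, or two such arcs, one obtains scenarios [i] (requiring $\gamma>0$ so that the positive equilibrium $v\equiv C_a$ exists), [ii], or [iii]; the no-boundary case additionally requires ruling out nonconstant $2\pi$-periodic positive orbits via a phase-portrait argument on the $(v,v')$ plane.

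For the resonant case $a=1/2$ (equivalently $\gamma=-2$), the free parameter $b$ produces a genuine one-parameter family. The plan is to exhibit the explicit ansatz \eqref{AKJH709876543jhgc45}, verify it solves the PDE on its positivity cone by rewriting $x_2-cx_1+c|x|=2r\sin(\theta/2)\bigl(\cos(\theta/2)+c\sin(\theta/2)\bigr)$ to read off \eqref{TICCI}, and then conclude that, since the space of local solutions at a boundary point (parameterized by $b$) and the family \eqref{AKJH709876543jhgc45} (parameterized by $c$) are each one-dimensional, they coincide up to rotation and reflection. The hardest steps I expect are: (a) using \eqref{LA23ureg} precisely enough to force $E=0$ when $a>1$ while recovering the smooth boundary behavior when $a<1$; (b) ruling out nonconstant $2\pi$-periodic positive orbits in the no-boundary case, which is a genuinely nonlinear ODE question; and (c) matching the Taylor parameter $b$ bijectively to the geometric parameter $c$ of \eqref{AKJH709876543jhgc45}, so that no extraneous branches of solutions remain.
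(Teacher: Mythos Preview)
Your overall strategy --- polar separation, the substitution $\phi=v^{1/a}$, the first integral, and the split into the strictly-positive periodic case versus the case with a boundary zero --- is exactly the route the paper takes (there $y=\frac{a}{\sqrt2}\,\phi$). The gap is in what you call ``the heart of the proof''.

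The Taylor expansion $\phi(\theta)=\frac{\sqrt2}{a}\,\tau+b\,\tau^2+\cdots$ presupposes $\phi\in C^2$ at the boundary point, but for $a\in(\tfrac12,1)$ hypothesis~\eqref{LA23ureg} only grants $\phi\in C^\xi$ with some $\xi>3-2a\in(1,2)$, strictly weaker than $C^2$. Worse, the Cauchy problem at $\phi=0$ is \emph{singular} (the coefficient of $\phi''$ vanishes), so knowing all formal Taylor coefficients would not give uniqueness without analyticity up to the boundary, which you do not have. The paper does not argue via Taylor matching; it reads the obstruction directly off the first integral you already wrote, $a^2[(\phi')^2+\phi^2]=2+E\,\phi^{2-2a}$. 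For $a\in(0,1)$ and $E\ne0$ this forces $\phi'(\tau)-\phi'(0)\sim C\,\tau^{2(1-a)}$, hence $\phi\in C^{3-2a}$ and no better; for $a>1$ and $E\ne0$ it forces $\phi(\tau)\sim C\,\tau^{1/a}$. Either way \eqref{LA23ureg} fails, so $E=0$, and then the first integral is simply $(\phi')^2+\phi^2=2/a^2$, giving $\phi=\frac{\sqrt2}{a}\sin\tau$ with no further matching. The resonance at $a=\tfrac12$ is precisely that the fractional exponent $2(1-a)$ becomes $1$: the extra term is then smooth, the first integral is a quadratic in $\phi$ and integrates explicitly to $\phi=\frac{\sqrt2}{a}\big(\sin\tau+\tfrac{m}{2}(1-\cos\tau)\big)$, which is exactly~\eqref{AKJH709876543jhgc45} after passing back to Cartesian coordinates --- so the identification of $b$ with $c$ is by explicit formula, not a dimension count. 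Your relation $(2a-1)b=0$ is a symptom of the resonance (it detects when $3-2a$ collides with~$2$), not the mechanism that excludes $E\ne0$.

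For the strictly-positive periodic case (your step~(b)) the paper does not run a phase-portrait argument: it evaluates the ODE at an interior minimum and maximum to pin $y$ between $\sqrt{(a-1)/a}$ from both sides, then uses the first integral and Cauchy uniqueness at a \emph{regular} point to conclude $y\equiv\sqrt{(a-1)/a}$.
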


\begin{figure}[h] 
\includegraphics[width=0.5\textwidth]{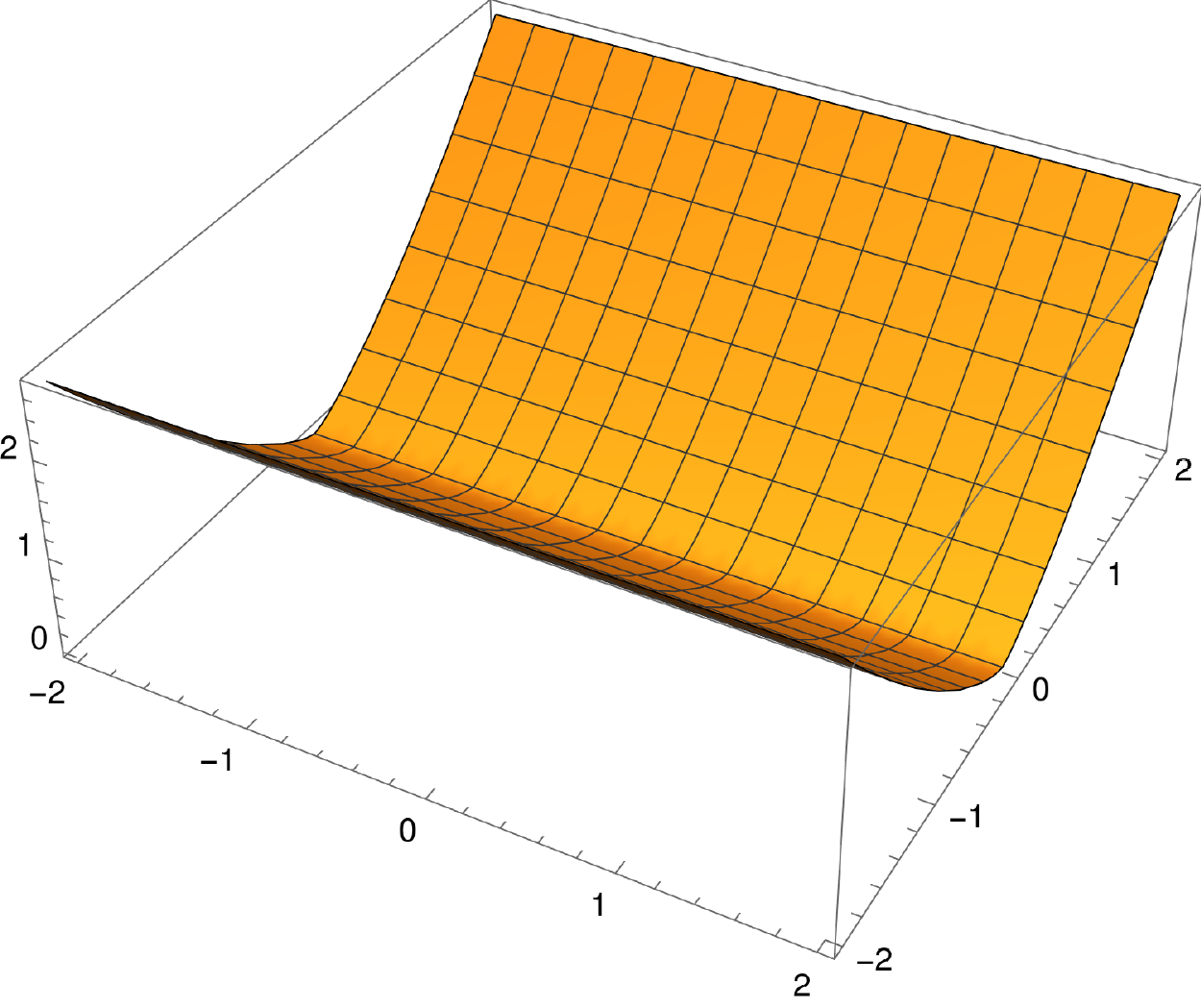} 
\caption{\footnotesize\sl The solution in \eqref{BIS-HAN-poss-1-NUOVANs} with~$a:=4/3$.}
\label{FIG2-c-c-c}\end{figure}

We stress that the scenarios described
in~[i], [ii], [iii] and~[iv] of
Theorem~\ref{APPDE-TH-BIS} are non-exclusive: namely,
when~$a=1/2$, the solution~$u$ can take any
of the forms in~\eqref{BIS-HAN-poss-22}, \eqref{BIS-HAN-poss-1-NUOVANs}
and~\eqref{AKJH709876543jhgc45}
(but not the form in~\eqref{BIS-HAN-poss-1}, since this requires~$\gamma>0$, that is~$a>1$).

Similarly, when~$\gamma\in(0,2)$, the solution can take
the expressions in~\eqref{BIS-HAN-poss-1}, \eqref{BIS-HAN-poss-22} and~\eqref{BIS-HAN-poss-1-NUOVANs}.

Another interesting feature of Theorem~\ref{APPDE-TH-BIS}
is that the ``degenerate'' case in which the free boundary
reduces to a single point, as described by~\eqref{BIS-HAN-poss-1},
can only occur when~$
\gamma\in(0,2)$, as detailed in~\eqref{02}.
Instead, the case~$\gamma<0$ only produces
a ``flat free boundary'', as given in~\eqref{BIS-HAN-poss-22},
with the only possible exception of~$\gamma=-2$,
in which a resonance can produce the situation described in~\eqref{AKJH709876543jhgc45}.

The solution in~\eqref{BIS-HAN-poss-22}
also coincides with that pointed out below~(2.3)
in~\cite{2022arXiv220307123D}.

Some of the solutions described in Theorem~\ref{APPDE-TH-BIS}
are depicted in Figures~\ref{FIG1}, \ref{FIG2}, \ref{FIG2-c-c-c} and~\ref{FIG3}.
See also Figure~\ref{FIG2-TAVBOL} for a table summarizing all these solutions.

\begin{figure}[h]  
\includegraphics[width=0.4\textwidth]{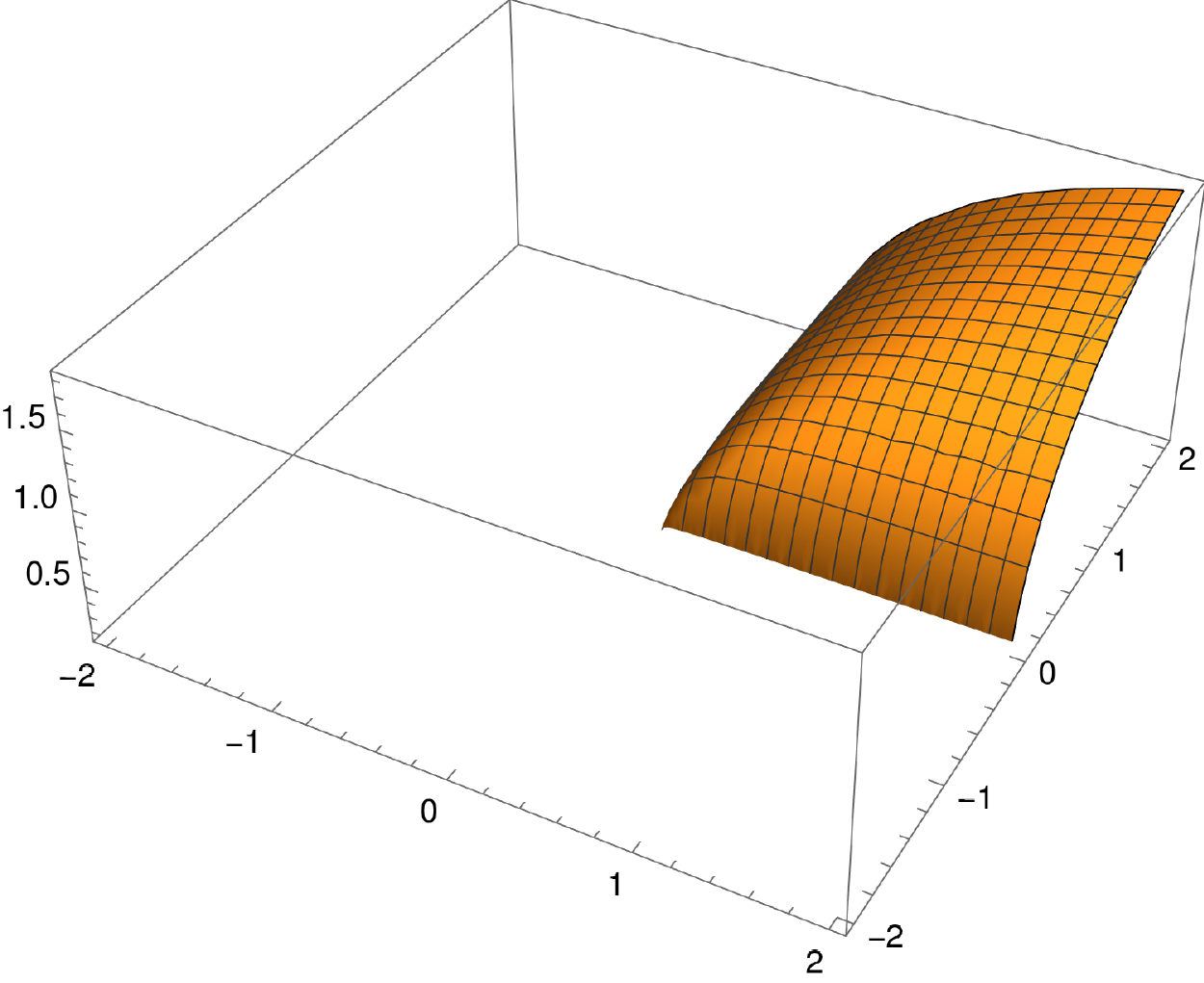}
\includegraphics[width=0.4\textwidth]{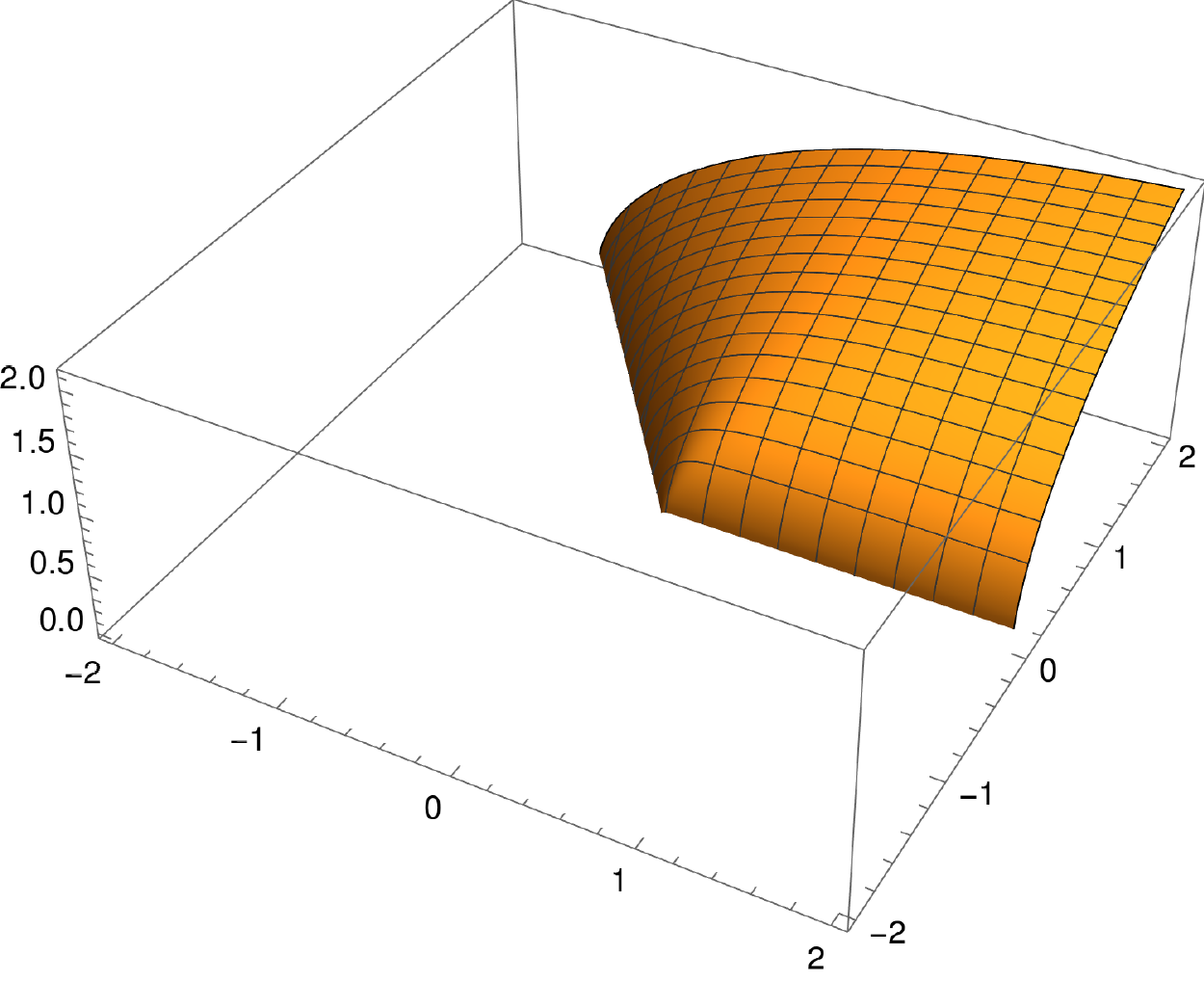}\\
\includegraphics[width=0.4\textwidth]{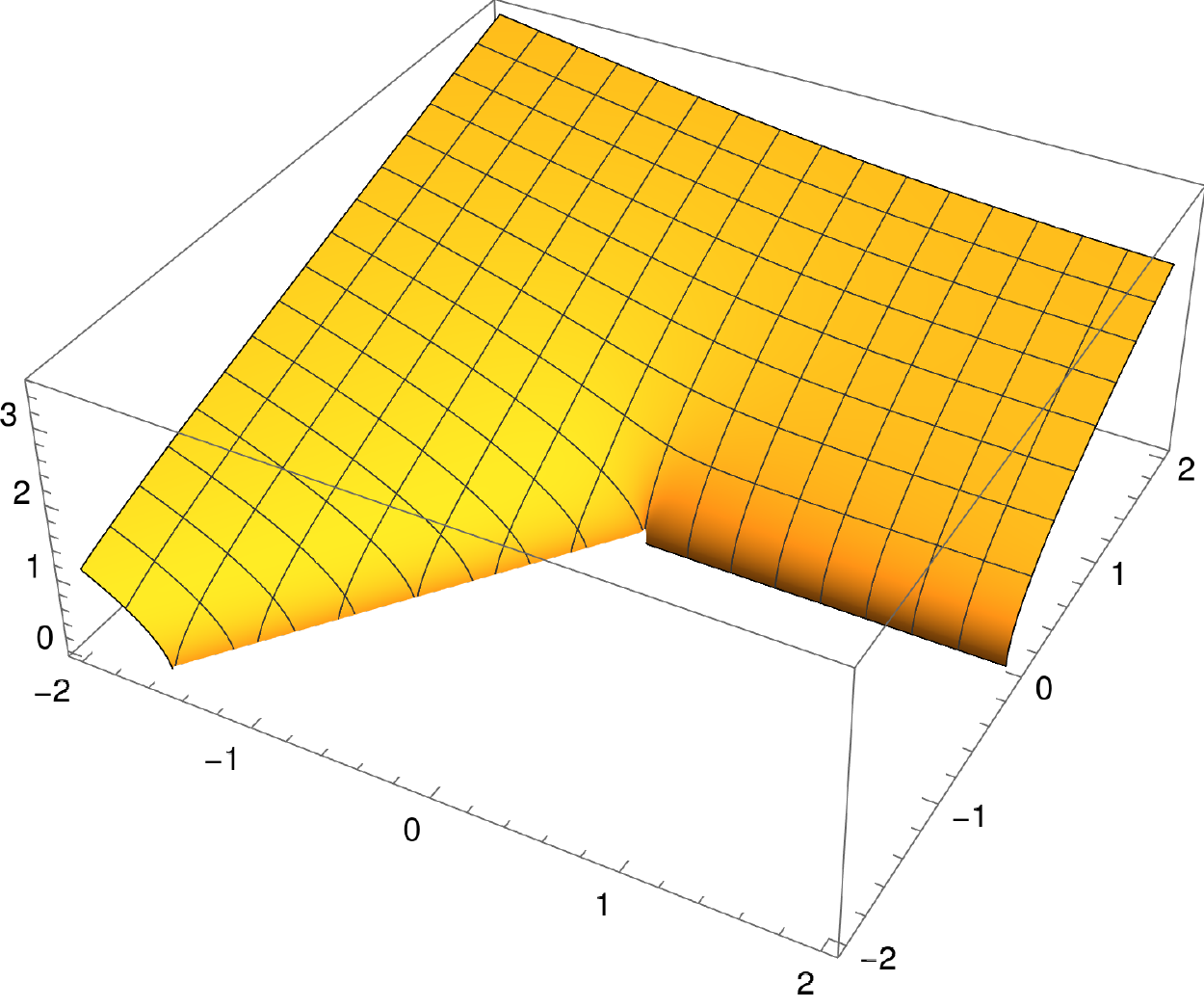}
\includegraphics[width=0.4\textwidth]{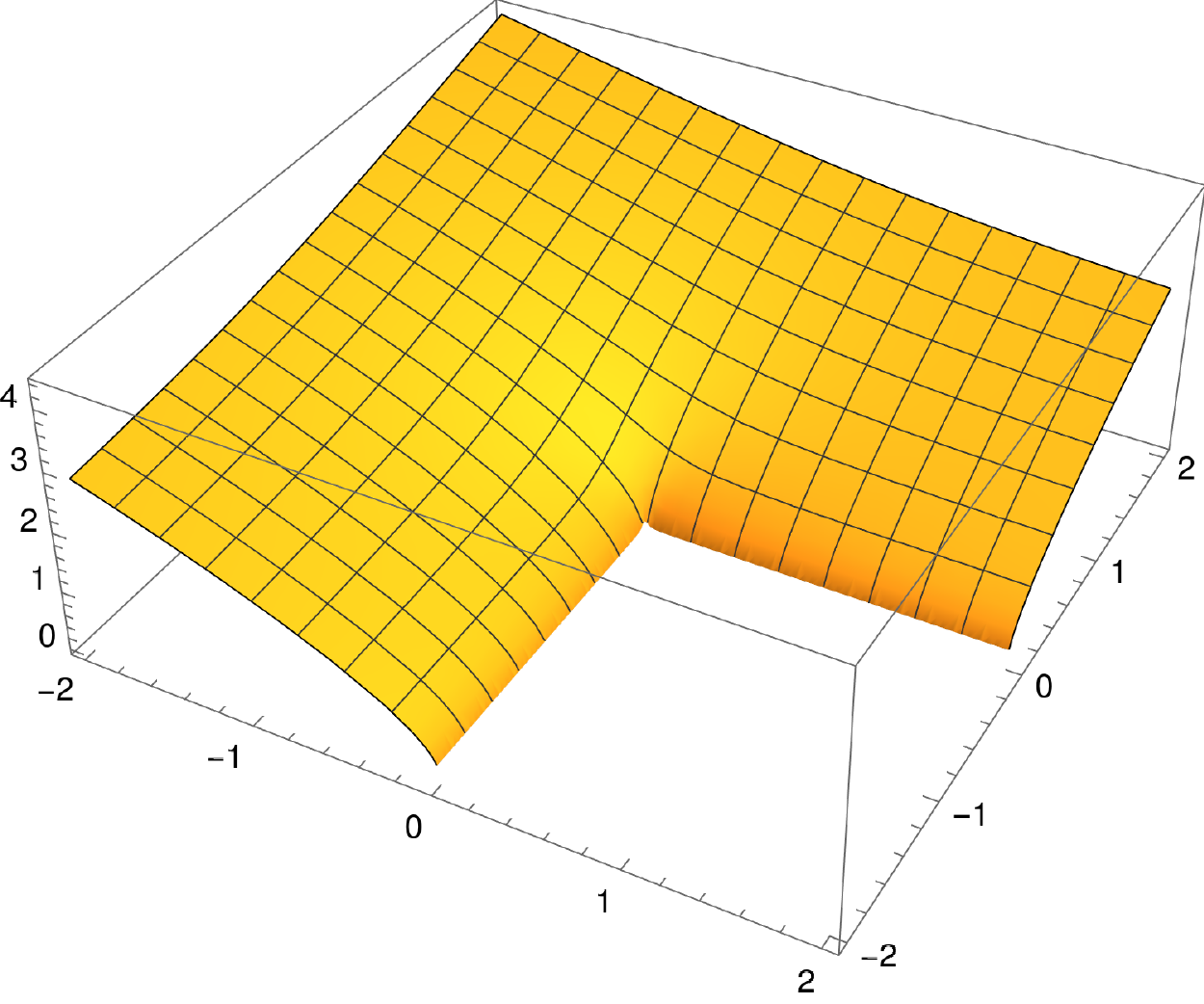}
\caption{\footnotesize\sl The solution in \eqref{AKJH709876543jhgc45} with~$c=-1,\;-1/2,\;1/2,\;1$.}
\label{FIG3}\end{figure}

In relation to~\eqref{TICCI}, we also remark that $T_c\in(\pi,2\pi)$
when~$c>0$, and~$T_c\in(0,\pi)$
when~$c<0$. In particular, the case~$c<0$ produces acute cones
in~\eqref{CONP}: in this scenario, the solutions
in~\eqref{AKJH709876543jhgc45} 
can be rotated and glued to form solutions with positive sets
in multi-flaps cones, see e.g. Figure~\ref{FIG4} (and, as a matter
of fact, these superpositions can be iterated, thus producing
also solutions whose positive sets is a cone with countably many
disjoint flaps).

\begin{figure}
\begin{center}     
\begin{tabular}{ | p{3cm} | p{8cm} | p{5cm} |}     
\hline  { \hspace{0.4cm} Ranges of~$\gamma$}  & {\hspace{2.5cm}}{Solution $u$}  & \ \ Free boundary $\partial\{u>0\}$\\ 
\hline 
\hline     \hspace{.5cm} {\bf[i]}  $\gamma\in(0, 2)$ &    {$u(x)=\frac{(2(a-1))^{\frac{a}2}}{a^{\frac{3a}2}}\,|x|^a,$}  & $\{0\}$\\ 
\hline     \hspace{.5cm} {\bf[ii]} $\gamma<2$&  $u(x)=\frac{2^{\frac{a}2}}{a^a}\, (x_2)_+^a,$ & $\{x\in \R^2, x_2=0\}$\\ 
\hline    \hspace{.5cm} {\bf[iii]} $\gamma<2$  & $u(x)=\frac{2^{\frac{a}2}}{a^a}\,|x_2|^a,$&  $\{x\in \R^2, x_2=0\}$\\ 
\hline    \hspace{.5cm} {\bf[iv]} $\gamma=-2$&   $u(x)={2^{\frac{3}4}}\,
\sqrt{
x_2-cx_1+c|x|}, $  & $\{ re^{i\theta},
r>0,\ \theta\in (0,T_c)
\},$
with $T_c$ given in \eqref{TICCI}
\\ 
\hline 
\end{tabular} 
\end{center} \caption{\footnotesize\sl Parade of the solutions detected in Theorem~\ref{APPDE-TH-BIS}.}
\label{FIG2-TAVBOL}
\end{figure}

\begin{figure}[h] 
\includegraphics[width=0.5\textwidth]{} 
\caption{\footnotesize\sl Superposition of the solutions in \eqref{AKJH709876543jhgc45} with~$c=-1/2$
and~$c=-2$.}
\label{FIG4}\end{figure}

We also stress that condition~\eqref{LA23ureg} cannot be removed, otherwise
a family of new solutions arises, as detailed in the following result (in which condition~\eqref{LA23ureg}
is not assumed):

\begin{theorem}\label{0o-2r-rtegripoHSdYjmsd-003PsKAM-l}
Let~$a>0$ and~$\gamma \ne0$.
Assume that~$u\in C(\R^2)$
is a nontrivial, nonnegative, positively homogeneous solution of degree~$a$ of the equation
\begin{equation*}
\Delta u=\gamma u^{\gamma-1}\qquad{\mbox{in a connected component of }}\;\R^2\cap\{u>0\}.
\end{equation*}

Then,
\begin{equation*}
\gamma<2\qquad{\mbox{and}}\qquad
a=\frac{2}{2-\gamma}.
\end{equation*}
Also either~$u$ is one of the solutions listed in Theorem~\ref{APPDE-TH-BIS}
or~$a\ne\frac12$ and, up to a rotation,
$$ u(r,\theta)=\frac{2^{\frac{a}{2}}}{a^a}\,r^a \,y^a(\theta),$$
where the function~$y$ is defined implicitly by
\begin{equation*}
\theta=\int_0^{y(\theta)} \frac{dY }{ \sqrt{1+m\,Y^{2(1-a)}-Y^2} },
\end{equation*}
for some~$m\in\R$, with~$m\ge0$ if~$a>1$.
\end{theorem}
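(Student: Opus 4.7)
The plan is to reduce the PDE to an ordinary differential equation on the angular variable via homogeneity, extract a first integral, and normalise it into the advertised implicit form. The regularity hypothesis \eqref{LA23ureg} plays no role in this theorem, which is precisely why a richer one-parameter family of angular profiles survives.

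In polar coordinates $(r,\theta)$, homogeneity forces $u(r,\theta)=r^{a}\phi(\theta)$ on the given connected component of positivity. A direct computation gives $\Delta u=r^{a-2}(\phi''+a^{2}\phi)$, whereas $\gamma u^{\gamma-1}=\gamma r^{a(\gamma-1)}\phi^{\gamma-1}$. Matching powers of $r$ forces $a-2=a(\gamma-1)$, that is $\gamma<2$ and $a=2/(2-\gamma)$. The angular profile then satisfies the autonomous, conservative ODE $\phi''+a^{2}\phi=\gamma\phi^{\gamma-1}$, and multiplying by $2\phi'$ and integrating supplies the first integral
$$(\phi')^{2}+a^{2}\phi^{2}-2\phi^{\gamma}=E$$
for some $E\in\R$.

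Next, the substitution $\phi=C_{a}y^{a}$ with $C_{a}:=2^{a/2}/a^{a}$ renormalises the conserved quantity. Using $a\gamma=2a-2$ and $C_{a}^{\gamma-2}=a^{2}/2$, both immediate consequences of $a=2/(2-\gamma)$, a short computation reduces the first integral to
$$(y')^{2}=1-y^{2}+m\,y^{2(1-a)},$$
where $m$ is a constant proportional to $E$. Fixing the rotation so that a zero of the angular profile sits at $\theta=0$, separation of variables yields
$$\theta=\int_{0}^{y(\theta)}\frac{dY}{\sqrt{1+m\,Y^{2(1-a)}-Y^{2}}}.$$
The restriction $m\ge 0$ when $a>1$ is forced by the nonnegativity of the radicand as $Y\to 0^{+}$, since $Y^{2(1-a)}\to+\infty$ in that range.

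To complete the classification I identify the degenerate cases and match them to Theorem~\ref{APPDE-TH-BIS}. If $\phi$ is constant, the equilibrium condition $\phi^{2-\gamma}=\gamma/a^{2}$ requires $\gamma>0$ and produces precisely \eqref{BIS-HAN-poss-1}. If $\phi$ is nonconstant and $m=0$, the implicit formula integrates to $y=\sin\theta$, producing \eqref{BIS-HAN-poss-22} on a half-plane and, upon gluing two copies across the axis, the two-sided solution \eqref{BIS-HAN-poss-1-NUOVANs}. If $a=1/2$ then $2(1-a)=1$, the radicand is the quadratic $1+mY-Y^{2}$, the integral is elementary, and substituting back into $u=r^{a}\phi$ together with the identification $c=m/2$ converts it into the cone formula \eqref{AKJH709876543jhgc45}; the opening angle given by the first positive root of the radicand matches \eqref{TICCI}. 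In every remaining instance $a\ne 1/2$ and $m\ne 0$, and $u$ takes the announced form. The principal obstacle I anticipate is the phase-plane bookkeeping: one must verify that the implicit relation defines a legitimate positive angular profile on a maximal arc, that the conservation law forces symmetry of the positivity arc about the maximum of $\phi$, and that each connected component of positivity of any classical homogeneous solution is captured by the scheme, including the marginal regimes in which $\phi$ or $y$ approaches the boundary with infinite derivative.
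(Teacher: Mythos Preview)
Your approach is essentially the paper's: reduce to the angular ODE, produce the first integral $(y')^2=1+m\,y^{2(1-a)}-y^2$, and separate variables. Your derivation of the first integral is in fact cleaner than the paper's. You multiply the $\phi$-equation by $2\phi'$ and integrate directly, whereas the paper passes first to the $y$-equation $y^2+yy''+(a-1)(y^2+(y')^2-1)=0$, introduces $w:=y^2+(y')^2-1$, and must carefully partition $(0,T_0)$ into the intervals where $y'\neq 0$ before showing that the constant $m$ is the same on each interval (Lemma~\ref{SIG}, equations~\eqref{3v436547y658u76ijhgtruhsgferHHHHH}--\eqref{jdi4e7584v96809876435436-976503}). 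Your energy argument sidesteps that bookkeeping entirely, since $((\phi')^2+a^2\phi^2-2\phi^\gamma)'=0$ holds throughout the positivity arc regardless of the sign of $\phi'$.

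There is, however, one genuine case you do not cover and do not flag among your ``anticipated obstacles'': the possibility that the connected component of $\{u>0\}$ is all of $\R^2\setminus\{0\}$, so that $\phi$ is a \emph{strictly positive} $2\pi$-periodic profile that is \emph{not constant}. Your argument presupposes a zero when you write ``fixing the rotation so that a zero of the angular profile sits at $\theta=0$'', and your case analysis jumps straight to ``if $\phi$ is constant'' without excluding nonconstant positive periodic profiles. The paper handles this separately in Lemma~\ref{6543mfghjH432Aba-234}: at the minimum $t_0$ one gets $y(t_0)\le\sqrt{(a-1)/a}$ (forcing $a>1$), at the maximum $t_1$ one gets $y(t_1)\ge\sqrt{(a-1)/a}$, and then one uses the first integral together with the explicit lower bound on the constant $\kappa$ to derive a contradiction from the strict inequality $y(t_1)>\sqrt{(a-1)/a}$. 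This step is short once you have the first integral, but it is not automatic and should be included.
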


A particular explicit solution of the family listed in Theorem~\ref{0o-2r-rtegripoHSdYjmsd-003PsKAM-l}
is given by
\begin{equation}\label{909090-0909}
u(x)= \frac{x_2^2 + 2 x_1 x_2}{2}.
\end{equation}
This is a solution of~$\Delta u=1$ which is positive in the cone~$\{x_2(x_2+2x_1)>0\}$,
corresponding to a solution of~$\Delta u=\gamma u^{\gamma-1}$ with~$\gamma=1$.
See Figure~\ref{FIG2-c-c-9090c} for a diagram of this function
(and the forthcoming Remark~\ref{UA:SOLUN-i}
for its explicit link to the family of solutions presented in Theorem~\ref{0o-2r-rtegripoHSdYjmsd-003PsKAM-l}).

The paper is organized as follows. In Section~\ref{SAAHE} we present a brief heuristic discussion
of the ODE analysis performed in this paper and on the difficulties related to the singularity of the
associated Cauchy problem. The rigorous analysis begins in Section~\ref{REDOE}, where we reduce the PDE problem to
a non-standard ODE problem. Besides a family of explicit solutions, the ODE analysis
will leverage a special function of improper integral type and its inverse:
these additional functions will be introduced and studied in Sections~\ref{a-spec-PJ}, \ref{KPLS-wpofepjobfkpowfh},
and~\ref{G:6scH}. In Section~\ref{SEC2} we present a series of tailored
results on ODEs which will lead to the proof of Theorems~\ref{APPDE-TH-BIS} and~\ref{0o-2r-rtegripoHSdYjmsd-003PsKAM-l}, as
given in Section~\ref{SEC3}.

Finally, in Section~\ref{JS:COMMRER}
we remark that the implicit solutions presented in Theorem~\ref{0o-2r-rtegripoHSdYjmsd-003PsKAM-l}, when extended
by zero outside their positivity cone, are {\em not}
weak solutions of~$\Delta u=\gamma u^{\gamma-1}\chi_{\{u>0\}}$.

\begin{figure}[h] 
\includegraphics[width=0.5\textwidth]{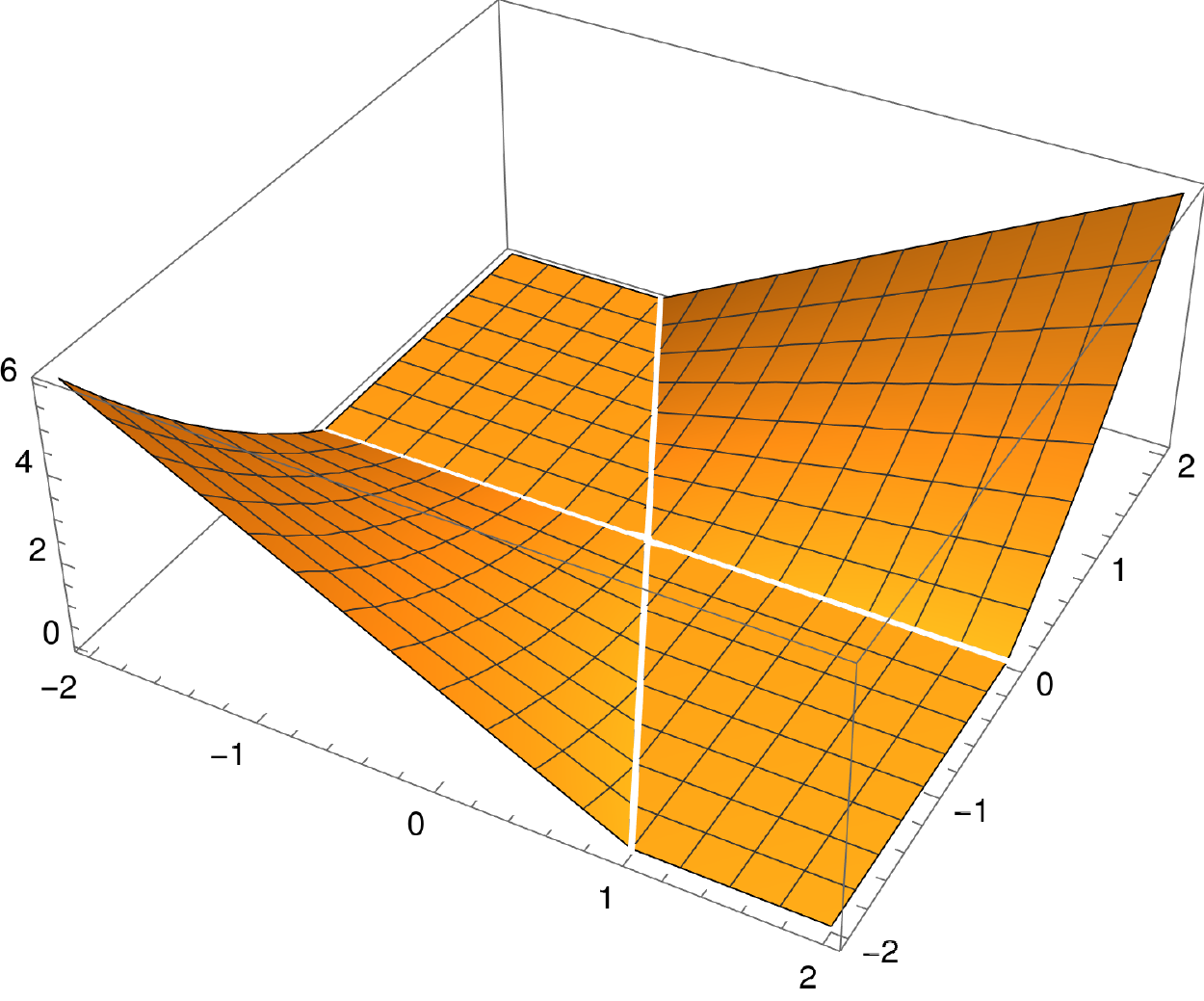} 
\caption{\footnotesize\sl The solution in \eqref{909090-0909}.}
\label{FIG2-c-c-9090c}\end{figure}

\section{A heuristic discussion}\label{SAAHE}

We give here a sketchy description of the ODE analysis related to our problem.
The classification of homogeneous solutions $u=r^ag(\theta)$ leads,
with the substitution~$y:=\frac{a}{\sqrt2} \,g^{\frac1a}$
to\footnote{{The choice of working with~$y$ instead of~$g$ presents assets and liabilities. On the one hand, the ODE for~$g$ is~$a^2g+g''=\frac{2(a-1)}{a}g^{\frac{a-2}{a}}$,
which is more standard than~\eqref{zkmd-01}.
On the other hand, the ODE in~\eqref{zkmd-01} has the advantage of placing the dependence on the exponent~$\gamma$ (i.e., on the parameter~$a$) only in the coefficients and of presenting useful algebraic properties in terms of factorization and reduction.

In a sense, the convenience of working with~$y$ instead of~$g$ is hinted by the ``one dimensional'' situation described by the solution in~\eqref{BIS-HAN-poss-22}, namely
$$ u(x)=\frac{2^{\frac{a}2}}{a^a}\, (x_2)_+^a
=\frac{2^{\frac{a}2}\,r^a}{a^a}\, (\sin\theta)_+^a.$$
In this case, in its positivity set~$g(\theta)$ would be~$\sin^a\theta$, while~$y(\theta)$ would have the simpler expression~$\sin\theta$.

The structural simplification in the one dimensional case is also our motivation to write the regularity assumption in~\eqref{LA23ureg} in terms of powers of~$\frac1a$.}} the 
ODE
\begin{equation}\label{zkmd-01}
y^2(\theta)+y (\theta)\,y''(\theta)+(a-1)\big(y^2(\theta)+(y'(\theta))^2-1\big)=0, 
\end{equation}
or
equivalently
\begin{equation*}
ay^2(\theta)+y (\theta)\,y''(\theta)+(a-1)(y'(\theta))^2+(1-a)=0.
\end{equation*}
This equation can be reduced to a first order ODE by substitution \begin{equation}\label{INVE}y'=u(y)\end{equation} arriving at
\begin{equation*}
ay^2(\theta)+yu'(y)y'+(a-1)u^2(y)+(1-a)=0, 
\end{equation*}
and using the new unknown function $u$ to substitute $y'$ in the last equation yields 
\begin{equation}
ay^2+(a-1)u^2+\frac y2(u^2)'+(1-a)=0.
\end{equation}
Finally, taking $U:=u^2(y)-1$ we have the linear first order ODE
$$
\frac y2 U'+(a-1)U+ay^2=0.
$$
The explicit solution of this ODE is given by 
\begin{equation}\label{zombi3}
(y')^2=1+my^{2(1-a)}-y^2, 
\end{equation}
where $m$ is the integration constant. 

Note that~\eqref{zombi3} gives an implicit relation of $\theta$ and $y$ and some extra care is needed to 
choose a branch of inverse function that produces the desired solution of our problem. Similarly, the change of independent variable from~$x$ to~$y$ utilized in~\eqref{INVE} needs to be justified, e.g. by showing that~$y'\ne0$ in the region of interest.
We remark also that, since the ODE in~\eqref{zkmd-01} is singular at the origin, it is not sufficient in our framework
just to ``exhibit'' a solution to complete a classification result, since in principle other solutions may arise due to a lack of uniqueness for a non-standard Cauchy problem.

Moreover, we observe that for one special case $a=2$ and~$m>0$ the implicit relation for $y$ takes form of an elliptic integral 
\begin{eqnarray*}
t=\int_0^{y(t)} \frac{Y\,dY }{ \sqrt{Y^2+m-Y^4} }
.\end{eqnarray*}
which can be solved explicitly, see Sections \ref {REDOE}, \ref{a-spec-PJ}
and~\ref{KPLS-wpofepjobfkpowfh}. However,  in general explicit representation of 
solutions are impossible: instead, the presence of unusual integral equations describing solutions in an implicit way
is actually the content of Theorem~\ref{0o-2r-rtegripoHSdYjmsd-003PsKAM-l}.

\section{Reduction to ODEs}\label{REDOE}

Here, we point out that, for a homogeneous function,
satisfying the partial differential equation in~\eqref{JAS:lkjhdf9tihyff8f8039875f} is equivalent to
have an appropriate power of the angular component satisfying a suitable ordinary differential equation.
The proof is a direct computation, though some care is needed since
the ordinary differential equation obtained is not a standard one.

\begin{lemma}\label{12341234}
Let~$a>0$ and~$\gamma\ne0$.
Let~$u:\R^2\setminus\{0\}\to\R$ be a homogeneous function of degree~$a$
expressed in polar coordinates~$(r,\theta)\in (0,+\infty)\times\S^1$ as
\begin{equation}\label{K:p0pp00} u(r,\theta)=r^a\,g(\theta).\end{equation}
Let~$S\subseteq\S^1$, assume that~$g>0$ in~$S$ and set
\begin{equation}\label{K:p0pp01}
y(\theta):=\frac{a}{\sqrt2} \,g^{\frac1a}(\theta).\end{equation}

If~$u$ is a solution of
\begin{equation}\label{P-SEDfgsottokmspoirA-0}
\Delta u=\gamma u^{\gamma-1}\qquad{\mbox{in }} (0,+\infty)\times S
\end{equation}
then
\begin{equation}
\label{00EXP:LA} \gamma<2,\qquad a=\frac{2}{2-\gamma}
\end{equation}
and~$y$ is a solution of
\begin{equation}\label{P-SEDfgsottokmspoirA-1}
y^2(\theta)+y (\theta)\,y''(\theta)+(a-1)\big(y^2(\theta)+(y'(\theta))^2-1\big)=0
\qquad{\mbox{for }} \theta\in  S.
\end{equation}

Conversely, if~\eqref{00EXP:LA} holds true and~$y$ is a solution of~\eqref{P-SEDfgsottokmspoirA-1}
then~$u$ is a solution of~\eqref{P-SEDfgsottokmspoirA-0}.
\end{lemma}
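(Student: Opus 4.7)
The plan is a direct computation in polar coordinates, followed by careful bookkeeping of exponents under the substitution~$y=\frac{a}{\sqrt{2}}g^{1/a}$. I would organize the proof into three stages, with the forward and converse implications essentially using the same algebra read in opposite directions.

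First I would compute the Laplacian of the homogeneous ansatz~$u(r,\theta)=r^ag(\theta)$ via
$\Delta u=u_{rr}+\frac{1}{r}u_r+\frac{1}{r^2}u_{\theta\theta}$, obtaining
$\Delta u=r^{a-2}\bigl(a^2 g(\theta)+g''(\theta)\bigr)$, while the right-hand side of~\eqref{P-SEDfgsottokmspoirA-0} reads~$\gamma r^{a(\gamma-1)}g^{\gamma-1}(\theta)$. Matching the powers of~$r$ forces~$a-2=a(\gamma-1)$, i.e. $a(2-\gamma)=2$, which yields both assertions in~\eqref{00EXP:LA}; in particular~$\gamma<2$ since~$a>0$, and the angular equation reduces to
\begin{equation*}
a^{2}g(\theta)+g''(\theta)=\gamma\,g^{\gamma-1}(\theta),\qquad \theta\in S.
\end{equation*}

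Second, I would translate this ODE for~$g$ into the ODE~\eqref{P-SEDfgsottokmspoirA-1} for~$y$. Using~\eqref{K:p0pp01} in the form~$g=\frac{2^{a/2}}{a^{a}}y^{a}$ (valid and smooth because~$g>0$ on~$S$), direct differentiation gives~$g'=\frac{2^{a/2}}{a^{a-1}}y^{a-1}y'$ and
\begin{equation*}
g''=\frac{2^{a/2}}{a^{a-1}}\bigl[(a-1)y^{a-2}(y')^{2}+y^{a-1}y''\bigr].
\end{equation*}
Plugging these into~$a^{2}g+g''=\gamma g^{\gamma-1}$, and using the identities~$\gamma=\tfrac{2(a-1)}{a}$ and~$a(\gamma-1)=a-2$ (both consequences of~\eqref{00EXP:LA}) to rewrite~$\gamma g^{\gamma-1}=\frac{(a-1)\,2^{a/2}}{a^{a-1}}y^{a-2}$, I would factor out the common prefactor~$\frac{2^{a/2}}{a^{a-1}}$ and divide by~$y^{a-2}>0$ to arrive at
\begin{equation*}
ay^{2}+(a-1)(y')^{2}+yy''-(a-1)=0,
\end{equation*}
which is exactly~\eqref{P-SEDfgsottokmspoirA-1} after regrouping the~$y^{2}$ terms.

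Third, for the converse, I would observe that every step in the computation above is reversible: given~\eqref{00EXP:LA} and~\eqref{P-SEDfgsottokmspoirA-1}, the same algebraic identities run backwards to produce~$a^{2}g+g''=\gamma g^{\gamma-1}$ pointwise on~$S$, and then multiplying by~$r^{a-2}$ recovers~\eqref{P-SEDfgsottokmspoirA-0}. I anticipate no conceptual obstacle; the only delicate point is being explicit that the substitution~$y=\tfrac{a}{\sqrt{2}}g^{1/a}$ is a bona fide diffeomorphism on the open set~$S$ where~$g>0$, so that divisions by powers of~$y$ and the chain rule are legitimate, and that the exponent identity~$a(\gamma-1)=a-2$ is what allows the right-hand side~$\gamma g^{\gamma-1}$ to land precisely on~$y^{a-2}$ rather than on some fractional power that would not cancel cleanly.
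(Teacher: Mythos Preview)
Your proof is correct and follows essentially the same approach as the paper: a direct polar-coordinate computation of~$\Delta u$ for the homogeneous ansatz, identification of the exponent relation~$a=\frac{2}{2-\gamma}$ by comparing powers of~$r$, and then the algebraic reduction to the ODE for~$y$. The only cosmetic difference is that the paper substitutes~$g=(\sqrt{2}\,y/a)^{a}$ into the Laplacian from the outset and carries the~$r$-dependence explicitly (arguing that a nonzero residual exponent~$\delta=a(\gamma-2)+2$ is contradicted by letting~$r\to0$ or~$r\to\infty$), whereas you first isolate the intermediate ODE~$a^{2}g+g''=\gamma g^{\gamma-1}$ and then change variables; both routes are equivalent and equally rigorous.
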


\begin{proof} We use the polar representation of the Laplace operator
\begin{equation*}
\begin{split}
\Delta u(x)\,&= \partial_r^2\big( r^a\,g(\theta)\big)+\frac1r \partial_r\big(r^a\,g(\theta)\big)
+\frac{1}{r^2}\partial_\theta^2\big(r^a\,g(\theta)\big)\\&=
a(a-1)r^{a-2} g(\theta)+ar^{a-2} g(\theta)+r^{a-2}g''(\theta)\\&=
a^2r^{a-2} g(\theta)+r^{a-2}g''(\theta)\\&=
a^2r^{a-2} \left(\frac{\sqrt2\,y(\theta)}{a}\right)^a+r^{a-2}\partial^2_\theta\left[ \left(\frac{\sqrt2\,y(\theta)}{a}\right)^a\right]\\&=
2^{\frac{a}{2}} a^{2-a} r^{a-2} y^a(\theta)+\frac{2^{\frac{a}2}r^{a-2}}{a^a}
\Big( a(a-1) y^{a-2}(\theta)(y'(\theta))^2+ay^{a-1}(\theta)y''(\theta)\Big)\\&=
2^{\frac{a}{2}} a^{1-a} r^{a-2} y^{a-2}(\theta)
\Big(ay^2(\theta)+(a-1) (y'(\theta))^2+y(\theta)y''(\theta)\Big).
\end{split}\end{equation*}
Therefore,
\begin{equation}\label{PJSNL0-prjeg23t4perjjX}\begin{split}&
\Delta u-\gamma u^{\gamma-1}\\ =\;&2^{\frac{a}{2}} a^{1-a} r^{a-2} y^{a-2}
\big(ay^2+(a-1) (y')^2+yy''\big)- \gamma (r^a\,g)^{\gamma-1}\\=\;&
r^{a-2}\left[
2^{\frac{a}{2}} a^{1-a} y^{a-2}
\big(ay^2+(a-1) (y')^2+yy''\big)- \gamma r^{a(\gamma-1)-a+2}
\left(\frac{\sqrt2}{a} y\right)^{a(\gamma-1)}\right]\\=\;&
r^{a-2}\left[
2^{\frac{a}{2}} a^{1-a} y^{a-2}
\big(ay^2+(a-1) (y')^2+yy''\big)-\frac{ 2^{\frac{a(\gamma-1)}2}\gamma}{a^{a(\gamma-1)}}
r^{a(\gamma-2)+2} y^{a(\gamma-1)}
\right].
\end{split}\end{equation}
For this reason, if~$u$ is a solution of~\eqref{P-SEDfgsottokmspoirA-0} then
\begin{equation}\label{xcsgKS:Md-12r3t4y}
2^{\frac{a}{2}} a^{1-a} y^{a-2}
\big(ay^2+(a-1) (y')^2+yy''\big)-\frac{ 2^{\frac{a(\gamma-1)}2}\gamma}{a^{a(\gamma-1)}}
r^{a(\gamma-2)+2} y^{a(\gamma-1)}=0.\end{equation}
Now, to prove~\eqref{00EXP:LA}, we suppose by contradiction that~$a\ne\frac{2}{2-\gamma}$
(note that if we reach a contradiction, then~\eqref{00EXP:LA} is established, since we assumed~$a>0$).
We thus write~\eqref{xcsgKS:Md-12r3t4y} as
\begin{equation}\label{xcsgKS:Md-12r3t4y-9}
2^{\frac{a}{2}} a^{1-a} y^{a-2}
\big(ay^2+(a-1) (y')^2+yy''\big)-\frac{ 2^{\frac{a(\gamma-1)}2}\gamma}{a^{a(\gamma-1)}}
r^{\delta} y^{a(\gamma-1)}=0\end{equation}
with~$\delta\ne0$. But it cannot be that~$\delta>0$, otherwise we would reach a contradiction by sending~$r\to+\infty$, and neither that~$\delta<0$, otherwise we would reach a contradiction by sending~$r\searrow0$. The proof of~\eqref{00EXP:LA} is thereby complete.

In the light of~\eqref{00EXP:LA}, equation~\eqref{xcsgKS:Md-12r3t4y} reduces to
\begin{equation*}
2^{\frac{a}{2}} a^{1-a} y^{a-2}
\big(ay^2+(a-1) (y')^2+yy''\big)- 2^{\frac{a}2 }a^{1-a}(a-1)
y^{a-2}=0,\end{equation*}
whence
\begin{equation*}
\big(ay^2+(a-1) (y')^2+yy''\big)- (a-1)=0,\end{equation*}
which is~\eqref{P-SEDfgsottokmspoirA-1}, as desired.

Now we assume that~$y$ satisfies~\eqref{P-SEDfgsottokmspoirA-1}
and~\eqref{00EXP:LA} holds true. Then, we infer from~\eqref{PJSNL0-prjeg23t4perjjX} that \begin{eqnarray*}&&
\frac{a^{a-1}\,r^{2-a}\,y^{2-a}}{2^{\frac{a}{2}}}\big(\Delta u-\gamma u^{\gamma-1}\big)
=
\big(ay^2+(a-1) (y')^2+yy''\big)-(a-1)=0,
\end{eqnarray*}
showing that~\eqref{P-SEDfgsottokmspoirA-0} holds true.
\end{proof}

\section{Two special functions $\psi(y)$ and $\Psi(y)$}\label{a-spec-PJ}

In this section, we study a special function built from an improper integral.

Let~$m\in\R$ and~$a\in(0,1)\cup(1,+\infty)$.
If~$a>1$ assume additionally that
\begin{equation}\label{ASSUNBGGIUNPEYBA}
m\ge0.
\end{equation}
For all~$y>0$ we let
\begin{equation}\label{xISPd92u0rj3-Xriyf2rbpo43-0}\psi(y):=1+m\,y^{2(1-a)}-y^2.\end{equation}
We observe that, by~\eqref{ASSUNBGGIUNPEYBA},
$$ \lim_{y\searrow0}\psi(y)=\begin{cases}
1 &{\mbox{ if }}a\in(0,1),\\
1&{\mbox{ if $a>1$ and~$m=0$,}}\\
+\infty&{\mbox{ if $a>1$ and~$m>0$.}} 
\end{cases}$$
As a result,
$$\lim_{y\searrow0}\psi(y)\ge1>-\infty=\lim_{y\to+\infty}\psi(y)$$
and therefore there exists a unique~$y_*>0$ such that
\begin{equation}\label{xISPd92u0rj3-Xriyf2rbpo43}
{\mbox{$\psi(y)>0$ for all~$y\in(0,y_*)$ and~$\psi(y_*)=0$.}}\end{equation}

\begin{lemma}
We have that
\begin{equation}\label{8709rowyndintha2}
\psi'(y_*)<0.
\end{equation}
\end{lemma}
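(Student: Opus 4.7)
The plan is to compute $\psi'(y_*)$ explicitly and then use the relation $\psi(y_*)=0$ to eliminate the parameter $m$, reducing the statement to an elementary inequality in $y_*$ and $a$.

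First I would differentiate the definition in~\eqref{xISPd92u0rj3-Xriyf2rbpo43-0} to get
\[
\psi'(y)=2m(1-a)\,y^{1-2a}-2y,
\]
and multiply by $y$ so that the factor $y^{2(1-a)}$ appears: $y\psi'(y)=2(1-a)\,m\,y^{2(1-a)}-2y^2$. Next I would use the defining identity $\psi(y_*)=0$, which can be rewritten as $m\,y_*^{2(1-a)}=y_*^2-1$, and substitute this into the previous expression to obtain
\[
y_*\,\psi'(y_*)=2(1-a)(y_*^2-1)-2y_*^2=-2\bigl(a\,y_*^2+(1-a)\bigr).
\]
The remaining task is to verify that $a\,y_*^2+(1-a)>0$.

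This splits into two cases according to the dichotomy in the hypothesis. If $a\in(0,1)$, then both summands $a y_*^2$ and $1-a$ are strictly positive, so the inequality is immediate. If $a>1$, then we invoke the standing assumption~\eqref{ASSUNBGGIUNPEYBA} that $m\ge0$: since $y_*^{2(1-a)}>0$, the relation $y_*^2=1+m\,y_*^{2(1-a)}$ gives $y_*^2\ge 1$, and hence $a\,y_*^2+(1-a)\ge a+(1-a)=1>0$. Combining the two cases, $\psi'(y_*)<0$, which is~\eqref{8709rowyndintha2}.

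I do not expect any genuine obstacle here: the main point is simply to notice the algebraic convenience of multiplying $\psi'$ by $y$ so that one can cleanly substitute the constraint $m\,y_*^{2(1-a)}=y_*^2-1$ coming from $\psi(y_*)=0$. The assumption~\eqref{ASSUNBGGIUNPEYBA} is used precisely (and only) in the case $a>1$ to secure the lower bound $y_*\ge 1$, which is otherwise false; without it one could have $m<0$ large in absolute value, producing a zero of $\psi$ at some $y_*<1$ where the sign of $a y_*^2+(1-a)$ would need to be examined directly.
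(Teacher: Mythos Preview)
Your argument is correct. The algebra checks: multiplying $\psi'$ by $y$ lets you substitute the constraint $m\,y_*^{2(1-a)}=y_*^2-1$ and arrive at $y_*\psi'(y_*)=-2\bigl(a y_*^2+(1-a)\bigr)$, and the case split handles both ranges of $a$ cleanly.

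Your route is genuinely different from the paper's. The paper proceeds in two steps: first it observes that $\psi'(y_*)\le 0$ simply because $y_*$ is the first zero of $\psi$ (so $\psi(y_*-\e)\ge 0$ forces the one-sided derivative inequality), and then it rules out $\psi'(y_*)=0$ by contradiction. In that contradiction argument the two equations $\psi(y_*)=0$ and $\psi'(y_*)=0$ are solved simultaneously to force $(1-a)m>0$ and then $a>1$, which together with~\eqref{ASSUNBGGIUNPEYBA} yields the contradiction $m<0$. Your approach is more direct and more informative: you obtain an explicit formula for $y_*\psi'(y_*)$ rather than merely a sign, and the role of the hypothesis $m\ge 0$ when $a>1$ is made transparent (it gives $y_*\ge 1$, hence $a y_*^2+(1-a)\ge 1$). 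The paper's approach, on the other hand, makes no use of the value of $y_*$ beyond the system $\psi(y_*)=\psi'(y_*)=0$, and so would adapt more readily to situations where one cannot isolate $m$ so conveniently.
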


\begin{proof} By~\eqref{xISPd92u0rj3-Xriyf2rbpo43}, for all small~$\e>0$,
$$ 0\le\frac{\psi(y_*-\e)}\e=\frac{\psi(y_*)-\psi'(y_*)\e+O(\e^2)}\e=-\psi'(y_*)+O(\e),$$
hence, sending~$\e\searrow0$, we find that~$\psi'(y_*)\le0$.

Consequently, to establish~\eqref{8709rowyndintha2}, it suffices to
check that
\begin{equation}\label{8709rowyndintha2-b}
\psi'(y_*)\ne0 .
\end{equation} To this end, suppose, by contradiction, that~$\psi'(y_*)=0$. Then,
\begin{equation*}
0=\psi'(y_*)=2(1-a)m\,y_*^{1-2a}-2y_*,\end{equation*}
whence~$y^{2a}_*=(1-a)m$.

This gives that
\begin{equation}\label{0iuojfel-COndfa-1}
(1-a)m>0\end{equation}
and
\begin{equation*}
y_*=\big((1-a)m\big)^{\frac1{2a}}.\end{equation*}

Accordingly, if~$\psi'(y_*)=0$, we deduce that
\begin{equation}\label{0iuojfel-COndfa-12}\begin{split}
&0=\psi(y_*)=\psi\left( \big((1-a)m\big)^{\frac1{2a}}\right)=
1+m\,\big((1-a)m\big)^{\frac{1-a}{a}}-\big((1-a)m\big)^{\frac1{a}}\\
&\qquad=1+\big((1-a)m\big)^{\frac1{a}}
\left( \frac1{1-a}-1\right)=1+\big((1-a)m\big)^{\frac1{a}}
\frac{a}{1-a}.
\end{split}\end{equation}
Necessarily, this gives that~$\frac{a}{1-a}<0$ and so~$a>1$.
Combined with~\eqref{0iuojfel-COndfa-1}, this establishes that~$m<0$, but this is
against our assumption in~\eqref{ASSUNBGGIUNPEYBA}.
The proof of~\eqref{8709rowyndintha2-b} is thereby complete.
\end{proof}

Now, for all~$y\in[0,y_*)$ we define
\begin{equation}\label{PKS-INmndsoitcrevES-0}
\Psi(y):=\int_0^y \frac{dY }{ \sqrt{\psi(Y)} }=\int_0^y \frac{dY }{ \sqrt{1+m\,Y^{2(1-a)}-Y^2} }.\end{equation}
Note that~$\Psi(0)=0$. Moreover,
\begin{equation}\label{PKS-INmndsoitcrevES}
{\mbox{$\Psi$ is a strictly increasing function of~$y\in[0,y_*)$}}\end{equation} and we may define
\begin{equation}\label{PKS-INmndsoitcrevES-1} t_*:=\lim_{y\nearrow y_*}\Psi(y)\in(0,+\infty].\end{equation}

Now we show that~$t_*$ is always finite, according to the next observation:

\begin{lemma}\label{8709rowyndintha3}
We have that~$t_*<+\infty$.
\end{lemma}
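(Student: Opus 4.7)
The plan is to show the integrand $1/\sqrt{\psi(Y)}$ has only an integrable singularity at the upper endpoint $y_*$, and is otherwise (essentially) bounded on $[0,y_*)$, so the improper integral defining $t_*$ converges.

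First I would handle the behaviour near $y_*$. Since $\psi$ is smooth on $(0,+\infty)$ and, by the previous lemma, $\psi'(y_*)<0$, Taylor expansion at $y_*$ gives
$$\psi(y)=-\psi'(y_*)(y_*-y)+O\bigl((y_*-y)^2\bigr)\qquad\text{as }y\nearrow y_*.$$
Setting $c:=-\psi'(y_*)/2>0$, there exists $\delta\in(0,y_*/2)$ such that $\psi(y)\ge c(y_*-y)$ for all $y\in[y_*-\delta,y_*)$. Consequently
$$\int_{y_*-\delta}^{y_*}\frac{dY}{\sqrt{\psi(Y)}}\;\le\;\int_{y_*-\delta}^{y_*}\frac{dY}{\sqrt{c(y_*-Y)}}=\frac{2\sqrt{\delta}}{\sqrt{c}}<+\infty,$$
which takes care of the only genuine singularity.

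Next I would check that the remaining piece $\int_0^{y_*-\delta}\tfrac{dY}{\sqrt{\psi(Y)}}$ is finite. On $[0,y_*-\delta]$ the function $\psi$ is continuous and strictly positive (in the case $a>1$, $m>0$ one has the limiting value $\psi(0^+)=+\infty$, which only makes $1/\sqrt{\psi}$ smaller there; in all other cases $\psi(0^+)=1$). Hence $1/\sqrt{\psi}$ is continuous on $(0,y_*-\delta]$ and bounded near $0$, so the integral over $[0,y_*-\delta]$ is at most $(y_*-\delta)\sup_{[0,y_*-\delta]}\psi^{-1/2}<+\infty$.

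Combining the two estimates yields $\Psi(y_*-)\le \Psi(y_*-\delta)+2\sqrt{\delta/c}<+\infty$, i.e.\ $t_*<+\infty$. The only step requiring any care is ensuring that $\psi$ stays bounded below away from the endpoint $y_*$, but this follows from the definition of $y_*$ in \eqref{xISPd92u0rj3-Xriyf2rbpo43} together with the continuity of $\psi$ on $(0,y_*]$; the genuinely useful input is the strict inequality $\psi'(y_*)<0$ from \eqref{8709rowyndintha2}, which upgrades $\psi$'s vanishing at $y_*$ from ``some order $\ge 1$'' to ``exactly of order $1$'', giving an integrable square-root singularity rather than a divergent one.
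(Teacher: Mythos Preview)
Your proof is correct and follows essentially the same approach as the paper: both use the strict inequality $\psi'(y_*)<0$ from~\eqref{8709rowyndintha2} together with a Taylor expansion at $y_*$ to show that the singularity of $1/\sqrt{\psi}$ at $y_*$ is of order $(y_*-y)^{-1/2}$ and hence integrable, then observe that $\Psi(y_*-\delta)$ is finite. Your write-up is in fact slightly more explicit than the paper's in justifying the finiteness of the contribution on $[0,y_*-\delta]$ (the paper simply invokes $\Psi(y_*-\delta)$ without further comment), but the argument is the same.
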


\begin{proof}
We let~$\e>0$, to be taken suitably small here below.
Then,
\begin{equation}\label{EXOpanTeystO09j}
\psi(y_*-\e)=-\psi'(y_*)\e + O(\e^2)=|\psi'(y_*)|\e + O(\e^2)\end{equation}
and therefore, for all~$\delta>\eta>0$ suitably small,
$$ \int_{y_*-\delta}^{y_*-\eta}\frac{dY }{ \sqrt{\psi(Y)} }=
\int^{\delta}_{\eta}\frac{d\e }{ \sqrt{\psi(y_*-\e)} }
=\int^{\delta}_{\eta}\frac{d\e }{ \sqrt{\e}\,\sqrt{|\psi'(y_*)|+O(\e)} }.$$
As a result, for small~$\delta$, using~\eqref{8709rowyndintha2} we find that
$$ \int_{y_*-\delta}^{y_*-\eta}\frac{dY }{ \sqrt{\psi(Y)} }\le\int^{\delta}_{\eta}\frac{d\e }{ \sqrt{\e}\,\sqrt{|\psi'(y_*)|/2} }\le
2\sqrt{\frac{2\delta}{|\psi'(y_*)|}}.
$$
Therefore, for all~$y\in [y_*-\delta,y_*)$,
$$  \Psi(y)=\Psi(y_*-\delta)+\int_{y^*-\delta}^y \frac{dY }{ \sqrt{\psi(Y)} }
\le \Psi(y_*-\delta)+2\sqrt{\frac{2\delta}{|\psi'(y_*)|}},$$
and therefore we can send~$y\nearrow y_*$ and obtain that~$t_*\le\Psi(y_*-\delta)+2\sqrt{\frac{2\delta}{|\psi'(y_*)|}}<+\infty$.
\end{proof}

\begin{figure}[h] 
\includegraphics[width=0.27\textwidth]{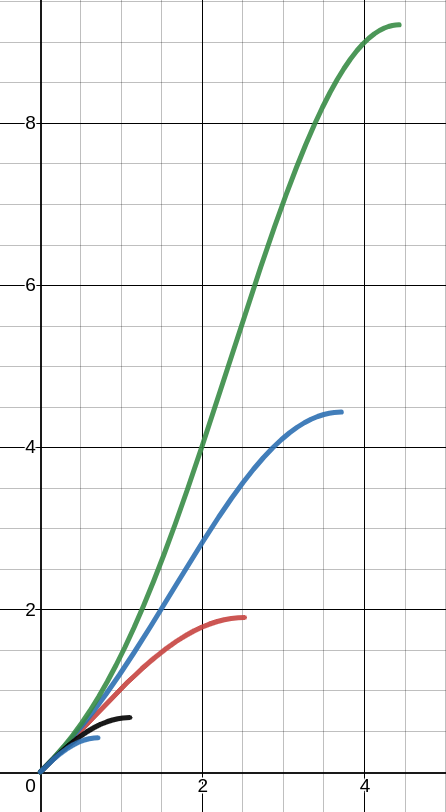} 
\caption{\footnotesize\sl Numerical plot of~$\Upsilon$ for~$t\in[0,t_*]$ when~$a=\frac14$ and~$m\in\{-3,-1,1,2,3\}$.}
\label{FYokmEXVG1PSI1}\end{figure}

\section{The inverse function $\Upsilon$ of $\Psi$}\label{KPLS-wpofepjobfkpowfh}

Now we aim at inverting the special function constructed in the previous section.
This method of implicitly inverting an integral equation is somewhat inspired to that used in the study of
cnoidal wave solutions to the Korteweg-de Vries equation, see e.g.~\cite{10.48550}.

The special function that we obtain reconstructs all the solutions of the ODE
presented in Section~\ref{REDOE} and therefore, in light of Lemma~\ref{12341234},
all the suitable powers of the angular components of the solutions of the PDE in~\eqref{JAS:lkjhdf9tihyff8f8039875f}.

Let us now present the analytical details of this construction.
In view of Lemma~\ref{8709rowyndintha3}, we can extend continuously~$\Psi$ at the point~$y_*$ by setting
\begin{equation}\label{JSNxatgbsdLSCItgbdy}\Psi(y_*):=t_*\in(0,+\infty).\end{equation} Thus, by~\eqref{PKS-INmndsoitcrevES}, we can define
the inverse function of~$\Psi:[0,y_*]\to[0,t_*]$, that is we denote by~$\Upsilon:[0,t_*]\to[0,y_*]$ the unique function such that~$\Psi(\Upsilon(t))=t$ (that is, $\Upsilon(t)$ is the unique solution~$y$ of the equation~$t=\Psi(y)$
and note that~$\Upsilon>0$ in~$(0,t_*]$).
We then evenly extend~$\Upsilon$ across~$t=t_*$ by setting, for all~$t\in(t_*,2t_*]$,
\begin{equation}\label{UJND-siym}
\Upsilon(t):=\Upsilon(2t_*-t).\end{equation}
In this way, $\Upsilon\in C([0,2t_*])$.

See Figures~\ref{FYokmEXVG1PSI1} and~\ref{FYokmEXVG1PSI2} for some numerical plots of~$\Upsilon$ for~$t\in[0,t_*]$.
The basic properties of this function are listed below:

\begin{prop} \label{3.3p}
We have that~$\Upsilon\in C^2((0,2t_*))$. Moreover, $\Upsilon(0)=0=\Upsilon(2t_*)$ and, for all~$t\in(0,2t_*)$, we have that
\begin{equation}\label{KMSeQUPSIKM-4} \Upsilon^2(t)+\Upsilon(t)\, \Upsilon''(t)+(a-1)\big(\Upsilon^2(t)+(\Upsilon'(t))^2-1\big)=0.\end{equation}
\end{prop}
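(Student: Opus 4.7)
The plan is to prove the three assertions in turn: the boundary values, the $C^2$ regularity, and the ODE. The boundary values $\Upsilon(0)=0=\Upsilon(2t_*)$ are immediate: the first one from $\Psi(0)=0$ and the definition of $\Upsilon$ as the inverse of $\Psi$, the second one from the even reflection~\eqref{UJND-siym} applied at $t=2t_*$, which gives $\Upsilon(2t_*)=\Upsilon(0)=0$.

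For the regularity, my plan is to split $(0,2t_*)$ as $(0,t_*)\cup\{t_*\}\cup(t_*,2t_*)$. On $(0,t_*)$, since $\Psi\in C^{\infty}((0,y_*))$ with $\Psi'(y)=1/\sqrt{\psi(y)}>0$ by~\eqref{xISPd92u0rj3-Xriyf2rbpo43}, the inverse function theorem yields $\Upsilon\in C^{\infty}((0,t_*))$ together with the identity
\begin{equation}\label{planUPS}
\Upsilon'(t)=\sqrt{\psi(\Upsilon(t))}=\sqrt{1+m\,\Upsilon^{2(1-a)}(t)-\Upsilon^2(t)}.
\end{equation}
On $(t_*,2t_*)$ the even reflection~\eqref{UJND-siym} transports this regularity, giving $\Upsilon\in C^{\infty}((t_*,2t_*))$ as well, with $\Upsilon'$ obtained from~\eqref{planUPS} by an overall sign change. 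The delicate point is $C^2$ smoothness at the junction $t=t_*$, where $\Psi'$ blows up and the inverse function theorem is not directly available. The key tool here is~\eqref{8709rowyndintha2}: refining the local expansion in~\eqref{EXOpanTeystO09j} into an integration inside the definition~\eqref{PKS-INmndsoitcrevES-0} of $\Psi$, one obtains, for small $\e=y_*-y>0$, that
\begin{equation*}
t_*-\Psi(y_*-\e)=\frac{2\sqrt{\e}}{\sqrt{|\psi'(y_*)|}}+O(\e^{3/2}),
\end{equation*}
which inverts to $y_*-\Upsilon(t)=\tfrac{|\psi'(y_*)|}{4}(t_*-t)^2+O((t_*-t)^3)$. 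Differentiating this expansion gives $\Upsilon'(t_*^-)=0$ and $\Upsilon''(t_*^-)=\tfrac12\psi'(y_*)$; by the even reflection the right-hand limits match, so $\Upsilon\in C^2$ at $t_*$.

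For the ODE~\eqref{KMSeQUPSIKM-4}, on the open set $(0,t_*)\cup(t_*,2t_*)$ where $\Upsilon'\ne 0$, I would square~\eqref{planUPS} to get $(\Upsilon')^2=1+m\,\Upsilon^{2(1-a)}-\Upsilon^2$, differentiate in $t$, and cancel the common factor $2\Upsilon'$ to obtain
\begin{equation*}
\Upsilon''=(1-a)\,m\,\Upsilon^{1-2a}-\Upsilon.
\end{equation*}
Multiplying through by $\Upsilon(t)>0$ and using the relation $m\,\Upsilon^{2(1-a)}=(\Upsilon')^2+\Upsilon^2-1$ to eliminate the explicit dependence on $m$, one arrives precisely at~\eqref{KMSeQUPSIKM-4}. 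The identity at the remaining point $t=t_*$ then follows by continuity of each term (note that $\Upsilon(t_*)=y_*>0$, so no spurious singularity from $\Upsilon^{1-2a}$ appears).

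The step I expect to be the main obstacle is verifying the $C^2$ regularity at $t_*$: both $\Psi'(y_*)=+\infty$ and the intermediate representation $\Upsilon''=(1-a)m\Upsilon^{1-2a}-\Upsilon$ is obtained through a division by $\Upsilon'$ that degenerates exactly there, so one cannot simply pass to the limit. The argument must instead rely on the quantitative non-degeneracy $\psi'(y_*)\ne 0$ provided by~\eqref{8709rowyndintha2} to extract a genuine quadratic Taylor expansion of $\Upsilon$ at $t_*$; once that is in place, the ODE at $t_*$ comes for free by continuity.
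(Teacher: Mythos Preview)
Your outline is correct and follows the same architecture as the paper: boundary values from the definition and the reflection, smoothness on $(0,t_*)$ via the inverse function theorem and~\eqref{planUPS}, transport to $(t_*,2t_*)$ by~\eqref{UJND-siym}, and the ODE by eliminating~$m$ from $(\Upsilon')^2=\psi(\Upsilon)$ and $\Upsilon''=(1-a)m\,\Upsilon^{1-2a}-\Upsilon$.

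The one place where the paper proceeds differently---and more simply---is the $C^2$ junction at $t=t_*$. You go through an inversion of the asymptotics of $\Psi$ near $y_*$ to extract a quadratic expansion of $\Upsilon$, and then ``differentiate this expansion''. That last step is not rigorous as written (an $O((t_*-t)^3)$ remainder for $\Upsilon$ does not by itself control $\Upsilon''$). The paper avoids this entirely: once the identity $\Upsilon''(t)=m(1-a)\,\Upsilon^{1-2a}(t)-\Upsilon(t)=\tfrac12\psi'(\Upsilon(t))$ is established on $(0,t_*)$, the right-hand side is a continuous function of $\Upsilon$ in a neighborhood of $\Upsilon(t_*)=y_*>0$, so $\lim_{t\nearrow t_*}\Upsilon''(t)$ exists (and equals $\tfrac12\psi'(y_*)$, matching your value); by even symmetry the right limit coincides, and together with $\Upsilon'(t_*^\pm)=\sqrt{\psi(y_*)}=0$ one gets $\Upsilon\in C^2$ at $t_*$ via the mean value theorem. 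In particular, your concern that ``one cannot simply pass to the limit'' is misplaced: the division by $\Upsilon'$ is used only to \emph{derive} the closed-form for $\Upsilon''$ on $(0,t_*)$, after which the limit $t\to t_*^-$ is harmless. Your Taylor-expansion detour reaches the same endpoint but with more work, and its justification ultimately reduces to this same observation.
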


\begin{proof} Since~$\Psi(\Upsilon(0))=0=\Psi(0)$, we obtain that~$\Upsilon(0)=0$.
As a result,~$\Upsilon(2t_*)=\Upsilon(2t_*-2t_*)=\Upsilon(0)=0$.

We also observe that~$\Upsilon\in C^2((0,t_*))$ and, for all~$t\in(0,t_*)$,
\begin{eqnarray*}
1=\frac{d}{dt}(t)=\frac{d}{dt}\big(\Psi(\Upsilon(t))\big)=\Psi'(\Upsilon(t))\,\Upsilon'(t)=
\frac{\Upsilon'(t)}{ \sqrt{1+m\,\Upsilon^{2(1-a)}(t)-\Upsilon^2(t)} },
\end{eqnarray*}
that is
\begin{equation}\label{LAOSndDRgsdd-pkm4R1} \Upsilon'(t)=\sqrt{1+m\,\Upsilon^{2(1-a)}(t)-\Upsilon^2(t)},\end{equation}
and thus
\begin{equation}\label{LAOSndDRgsdd-pkm4R2}\begin{split}&
\Upsilon''(t)=\frac{d}{dt}\left(\sqrt{1+m\,\Upsilon^{2(1-a)}(t)-\Upsilon^2(t)}\right)
=\frac{\Big( m(1-a)\,\Upsilon^{1-2a}(t)-\Upsilon(t)\Big)\,\Upsilon'(t)}{\sqrt{1+m\,\Upsilon^{2(1-a)}(t)-\Upsilon^2(t)}}\\&\qquad\qquad=
m(1-a)\,\Upsilon^{1-2a}(t)-\Upsilon(t).
\end{split}\end{equation}
By even symmetry, this also gives that~$\Upsilon\in C^2((0,t_*)\cup(t_*,2t_*))$.

Additionally, again by even symmetry,
$$ \Upsilon'(2t_*-t)=-\Upsilon'(t)\qquad{\mbox{and}}\qquad
\Upsilon''(2t_*-t)=\Upsilon''(t).$$
We also observe that~$\Psi(\Upsilon(t_*))=t_*=\Psi(y_*)$, whence~$\Upsilon(t_*)=y_*$ and therefore
$$ 1+m\,\Upsilon^{2(1-a)}(t_*)-\Upsilon^2(t_*)=1+m\,y_*^{2(1-a)}-y_*^2=\psi(y_*)=0.
$$
{F}rom the observations above, we infer that
\begin{eqnarray*}&&
\lim_{t\nearrow t_*}\Upsilon'(t)-\lim_{t\searrow t_*}\Upsilon'(t)=
\lim_{t\nearrow t_*}\Upsilon'(t)+\lim_{t\searrow t_*}\Upsilon'(2t_*-t)=2\lim_{t\nearrow t_*}\Upsilon'(t)\\&&\qquad=
2\sqrt{1+m\,\Upsilon^{2(1-a)}(t_*)-\Upsilon^2(t_*)}=0
\end{eqnarray*}
and therefore~$\Upsilon\in C^1((0,2t_*))$.

Also,
\begin{eqnarray*}&&
\lim_{t\nearrow t_*}\Upsilon''(t)-\lim_{t\searrow t_*}\Upsilon''(t)=
\lim_{t\nearrow t_*}\Upsilon''(t)-\lim_{t\searrow t_*}\Upsilon''(2t_*-t)=\lim_{t\nearrow t_*}\Upsilon''(t)-\lim_{t\nearrow t_*}\Upsilon''(t)=0
\end{eqnarray*}
and therefore~$\Upsilon\in C^2((0,2t_*))$, as desired.

It remains to check~\eqref{KMSeQUPSIKM-4}. For this, we observe that, if~$t\in(t_*,2t_*)$,
\begin{eqnarray*}&&
\Upsilon^2(2t_*-t)+\Upsilon(2t_*-t)\, \Upsilon''(2t_*-t)+(a-1)\big(\Upsilon^2(2t_*-t)+(\Upsilon'(2t_*-t))^2-1\big)\\&=&
\Upsilon^2(t)+\Upsilon(t)\, \Upsilon''(t)+(a-1)\big(\Upsilon^2(t)+(-\Upsilon'(t))^2-1\big)\\&=&
\Upsilon^2(t)+\Upsilon(t)\, \Upsilon''(t)+(a-1)\big(\Upsilon^2(t)+(\Upsilon'(t))^2-1\big),
\end{eqnarray*}
hence it suffices to check~\eqref{KMSeQUPSIKM-4} for~$t\in(0,t_*]$ (or, actually, for~$t\in(0,t_*)$
since the values at~$t_*$ can be reached by continuity).

To this end, in~$(0,t_*)$, we recall~\eqref{LAOSndDRgsdd-pkm4R1} and~\eqref{LAOSndDRgsdd-pkm4R2} and we compute that
\begin{eqnarray*}&&
\Upsilon^2+\Upsilon\, \Upsilon''+(a-1)\big(\Upsilon^2+(\Upsilon')^2-1\big)\\&&\qquad\qquad=
a\Upsilon^2+\Upsilon\, \big( m(1-a)\,\Upsilon^{1-2a}-\Upsilon\big)+(a-1)\big(
1+m\,\Upsilon^{2(1-a)}-\Upsilon^2-1\big)=0,
\end{eqnarray*}
completing the proof of the desired result.
\end{proof}

\begin{figure}[h] 
\includegraphics[width=0.33\textwidth]{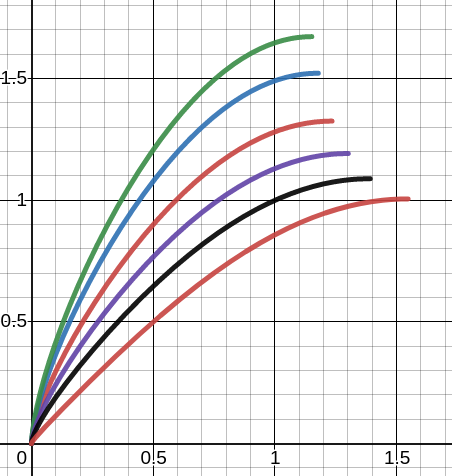} 
\caption{\footnotesize\sl Numerical plot of~$\Upsilon$ for~$t\in[0,t_*]$ when~$a=\frac32$ and~$m\in\{0.01, 0.2,0.5,1,2,3\}$.}
\label{FYokmEXVG1PSI2}\end{figure}

We now study the dependence of the above quantities with respect to the parameter~$m$
(considering~$a$ as given). For this, we use the notations~$\psi(y,m)$, $\Psi(y,m)$, $y_*(m)$, $t_*(m)$
and~$\Upsilon(t,m)$ to emphasize their dependence upon~$m$.

\begin{lemma}\label{athcrnothi}
We have that
\begin{equation}\label{0poi2rjetghm14567i8-2r3tgrtititiam}
y_*(0)=1\end{equation}
and
\begin{equation}\label{0p12543689oi2rjetghm-2r3tgrtititiam-2}t_*(0)=\frac\pi2.
\end{equation}
\end{lemma}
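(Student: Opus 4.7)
The plan is an entirely direct computation: when $m=0$, the function $\psi$ defined in \eqref{xISPd92u0rj3-Xriyf2rbpo43-0} collapses to the familiar expression $\psi(y)=1-y^2$, regardless of the value of $a$, and then the integral defining $\Psi$ becomes an arcsine.

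More concretely, I would first substitute $m=0$ into \eqref{xISPd92u0rj3-Xriyf2rbpo43-0} to obtain $\psi(y,0)=1-y^2$ for all $y>0$; note that the assumption \eqref{ASSUNBGGIUNPEYBA} is trivially satisfied when $m=0$ in the case $a>1$, so the construction of Section~\ref{a-spec-PJ} applies in both regimes $a\in(0,1)$ and $a>1$. The unique positive zero of $1-y^2$ is $y=1$, and $1-y^2>0$ precisely on $(0,1)$; by the characterization of $y_*$ in \eqref{xISPd92u0rj3-Xriyf2rbpo43}, this gives~\eqref{0poi2rjetghm14567i8-2r3tgrtititiam}.

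Next, I would plug this into~\eqref{PKS-INmndsoitcrevES-0} to get, for $y\in[0,1)$,
\begin{equation*}
\Psi(y,0)=\int_0^y\frac{dY}{\sqrt{1-Y^2}}=\arcsin(y).
\end{equation*}
By the definition \eqref{PKS-INmndsoitcrevES-1}, we therefore conclude
\begin{equation*}
t_*(0)=\lim_{y\nearrow 1}\Psi(y,0)=\lim_{y\nearrow 1}\arcsin(y)=\frac{\pi}{2},
\end{equation*}
which is~\eqref{0p12543689oi2rjetghm-2r3tgrtititiam-2}.

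There is essentially no obstacle here; the only point that deserves a brief mention is that the integral $\int_0^1 dY/\sqrt{1-Y^2}$ converges (so that Lemma~\ref{8709rowyndintha3}, which already guarantees finiteness of $t_*$ in full generality, is consistent with this explicit value $\pi/2$), and that the computation is valid for every admissible $a$, i.e.\ for every $a\in(0,1)\cup(1,+\infty)$.
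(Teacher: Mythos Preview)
Your proof is correct and follows essentially the same route as the paper: substitute $m=0$ into~\eqref{xISPd92u0rj3-Xriyf2rbpo43-0} to obtain $\psi(y,0)=1-y^2$, read off $y_*(0)=1$ from~\eqref{xISPd92u0rj3-Xriyf2rbpo43}, and then evaluate $t_*(0)$ via~\eqref{PKS-INmndsoitcrevES-0} and~\eqref{PKS-INmndsoitcrevES-1} as $\int_0^1 dY/\sqrt{1-Y^2}=\pi/2$. The only cosmetic difference is that you name the antiderivative $\arcsin$ explicitly, whereas the paper simply writes the definite integral.
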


\begin{proof} By~\eqref{xISPd92u0rj3-Xriyf2rbpo43-0} and~\eqref{xISPd92u0rj3-Xriyf2rbpo43},
$$ 0=\psi(y_*(0),0)=1-(y_*(0))^2,$$
leading to~\eqref{0poi2rjetghm14567i8-2r3tgrtititiam}.

Moreover, by~\eqref{PKS-INmndsoitcrevES-0}, \eqref{PKS-INmndsoitcrevES-1} and~\eqref{0poi2rjetghm14567i8-2r3tgrtititiam},
\begin{eqnarray*}
t_*(0) =\lim_{y\nearrow y_*(0)}\Psi(y,0)=\lim_{y\nearrow 1}
\int_0^y \frac{dY }{ \sqrt{1-Y^2} }=\int_0^1 \frac{dY }{ \sqrt{1-Y^2} }=\frac\pi2,
\end{eqnarray*}
which demonstrates~\eqref{0p12543689oi2rjetghm-2r3tgrtititiam-2}.
\end{proof}

\begin{corollary}\label{athcrnothi2} There exists~${\mathcal{M}}\subseteq\R$ such that~${\mathcal{M}}\ne\varnothing$
for which if~$m\in {\mathcal{M}}$ the following holds true.

There exist~$t_*\in(0,\pi]$ and a function~$\Upsilon\in C^2((0,2t_*))\cap C([0,2t_*])$
such that~$\Upsilon(0)=0=\Upsilon(2t_*)$ and, for all~$t\in(0,2t_*)$, we have that~$\Upsilon(t)>0$ and
\begin{equation}\label{OSKM-23} \Upsilon^2(t)+\Upsilon(t)\, \Upsilon''(t)+(a-1)\big(\Upsilon^2(t)+(\Upsilon'(t))^2-1\big)=0.\end{equation}
Also, for all~$t\in[0,2t_*]$, we have that~$\Upsilon(2t_*-t)=\Upsilon(t)$ and
$$t=\int_0^{\Upsilon(t)} \frac{dY }{ \sqrt{1+m\,Y^{2(1-a)}-Y^2} }.$$
\end{corollary}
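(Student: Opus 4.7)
The plan is to choose $\mathcal{M}$ as a nonempty set of admissible parameters for which every conclusion then follows from the constructions and lemmas established in Sections~\ref{a-spec-PJ} and~\ref{KPLS-wpofepjobfkpowfh}. Concretely, I would take $\mathcal{M}$ to be the set of admissible values of~$m$ (namely,~$m\in\R$ if~$a\in(0,1)$, and~$m\ge 0$ if~$a>1$, in light of~\eqref{ASSUNBGGIUNPEYBA}) for which the associated quantity~$t_*(m)$ satisfies~$t_*(m)\in(0,\pi]$. To verify~$\mathcal{M}\ne\varnothing$, I would invoke Lemma~\ref{athcrnothi}, which gives the explicit value~$t_*(0)=\pi/2\in(0,\pi]$, so that~$0\in\mathcal{M}$.

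Next, fix any~$m\in\mathcal{M}$. Lemma~\ref{8709rowyndintha3} ensures~$t_*<+\infty$, which legitimizes the continuous extension~\eqref{JSNxatgbsdLSCItgbdy} of~$\Psi$ to~$[0,y_*]$, the inversion yielding~$\Upsilon:[0,t_*]\to[0,y_*]$, and the even reflection~\eqref{UJND-siym} that extends~$\Upsilon$ to~$[0,2t_*]$. Proposition~\ref{3.3p} then delivers at once the regularity~$\Upsilon\in C^2((0,2t_*))\cap C([0,2t_*])$, the vanishing at the endpoints~$\Upsilon(0)=0=\Upsilon(2t_*)$, and the ODE~\eqref{OSKM-23}. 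The strict positivity~$\Upsilon>0$ on~$(0,2t_*)$ follows from the strict monotonicity of~$\Psi$ recorded in~\eqref{PKS-INmndsoitcrevES}, which gives~$\Upsilon>0$ on~$(0,t_*]$, combined with the even extension~\eqref{UJND-siym}, which transfers the positivity to~$[t_*,2t_*)$.

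Finally, the symmetry~$\Upsilon(2t_*-t)=\Upsilon(t)$ on~$[0,2t_*]$ is simply the defining identity~\eqref{UJND-siym}, and the integral representation is immediate on~$[0,t_*]$ from the definition~\eqref{PKS-INmndsoitcrevES-0} of~$\Psi$ together with~$\Psi(\Upsilon(t))=t$; on~$[t_*,2t_*]$ the corresponding relation is read off from~$\Psi(\Upsilon(t))=\Psi(\Upsilon(2t_*-t))=2t_*-t$ via the substitution~$t\mapsto 2t_*-t$ and the evenness already established.

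I do not anticipate a substantive obstacle: essentially all the analytic content—finiteness of~$t_*$, regularity and continuity of~$\Upsilon$ across~$t_*$ and at the endpoints, and the ODE itself—is already packaged in Sections~\ref{a-spec-PJ} and~\ref{KPLS-wpofepjobfkpowfh}. The only genuinely new ingredient is to exhibit at least one admissible~$m$ for which~$t_*(m)\le\pi$, and Lemma~\ref{athcrnothi} settles this immediately through the explicit computation~$t_*(0)=\pi/2$. If desired, one could enlarge~$\mathcal{M}$ further by a continuity-in-$m$ argument on the integral in~\eqref{PKS-INmndsoitcrevES-0}, but for the mere nonemptiness stated in the corollary no such extension is required.
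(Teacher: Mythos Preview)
Your proposal is correct and follows essentially the same approach as the paper: invoke Proposition~\ref{3.3p} for the construction, regularity, boundary values, and ODE satisfied by~$\Upsilon$, and appeal to Lemma~\ref{athcrnothi} (specifically~$t_*(0)=\pi/2$) to guarantee~$\mathcal{M}\ne\varnothing$. The paper's own proof is a two-sentence version of exactly this; your write-up simply unpacks the references more explicitly, and your observation that the integral identity literally holds on~$[0,t_*]$ (with the extension to~$[t_*,2t_*]$ read via the even reflection) is a fair gloss on a slight imprecision in the statement.
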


\begin{proof} The existence and basic properties of~$\Upsilon$ follow from Proposition~\ref{3.3p}.
The additional ingredient here is that we can find~${\mathcal{M}}\ne\varnothing$
such that when~$m\in{\mathcal{M}}$ it holds that~$t_*(m)\in(0,\pi]$,
which is warranted by Lemma~\ref{athcrnothi}.
\end{proof}

The importance of having that~$t_*\in(0,\pi]$ in Corollary~\ref{athcrnothi2} consists in being able to
use the function~$\Upsilon$ as a suitable power of the angular component of a solution of the PDE in~\eqref{JAS:lkjhdf9tihyff8f8039875f} (indeed, for this scope one wants that~$[0,2t_*]\subseteq[0,2\pi]$).

\section{Behavior of $\Upsilon$ near boundary point}\label{G:6scH}
Now we address the boundary regularity properties of the function~$\Upsilon$ introduced in Section~\ref{KPLS-wpofepjobfkpowfh}:

\begin{prop}\label{prop:basefier}
If~$m=0$, then~$\Upsilon(t)=\sin t$.

If instead~$m\ne0$, the following claims hold true.

\begin{itemize}
\item If~$a=\frac12$ then \begin{equation}\label{0ojlfw-COmdf-12}
\Upsilon(t)=\sin t+\frac{m}2(1-\cos t).
\end{equation}
\item If~$a\in(0,1)$ then~$\Upsilon\in C^1([0,2t_*])$,
\begin{equation}\label{JOSLN-21wekp2dwfme}
\Upsilon'(0)=1=-\Upsilon'(2t_*)
\end{equation}
and
\begin{equation}\label{JOSLN-21wekp2dwfme-2}
\lim_{t\searrow0}\frac{\Upsilon'(t)-\Upsilon'(0)}{t^{2(1-a)}}=\frac{m}2.
\end{equation}
\item More precisely, if~$a\in\left(0,\frac12\right)$ then
\begin{equation}\label{0ojlfw-COmdf-13}\begin{split}&
\Upsilon\in C^{2,1-2a}([0,2t_*])
\qquad{\mbox{with }}\quad\Upsilon''(0)=0\quad{\mbox{and }}\quad
\lim_{t\searrow0}\frac{\Upsilon''(t)-\Upsilon''(0)}{t^{1-2a}}
=(1-a)m,\\&{\mbox{ but
$\Upsilon\not\in C^{2,\xi}([0,2t_*])$ if~$\xi>1-2a$,}}
\end{split}
\end{equation}
while if~$a\in\left(\frac12,1\right)$ then
\begin{equation}\label{0ojlfw-COmdf-14}
{\mbox{$\Upsilon\in C^{1,2(1-a)}([0,2t_*])$, but
$\Upsilon\not\in C^{1,\xi}([0,2t_*])$ if~$\xi>2(1-a)$.}}
\end{equation}
\item If~$a>1$ then
\begin{equation}\label{0ojlfw-COmdf-15}
\begin{split}&
\lim_{t\searrow0}\frac{ \Upsilon(t)-\Upsilon(0)}{t^{\frac1a}}=a^{\frac1{a}}
{m}^{\frac1{2a}},\\&
{\mbox{$\Upsilon \in C^{\frac1a}([0,2t_*])$,
but $\Upsilon\not\in C^\xi([0,2t_*])$ when~$\xi>\frac1a$.}}
\end{split}\end{equation}
\end{itemize}
\end{prop}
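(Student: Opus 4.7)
The plan is to analyze the four regimes $m=0$, $a=\tfrac12$, $a\in(0,1)\setminus\{\tfrac12\}$ and $a>1$ separately, always starting from the quadrature
\[(\Upsilon'(t))^2=1+m\,\Upsilon(t)^{2(1-a)}-\Upsilon(t)^2\]
of \eqref{LAOSndDRgsdd-pkm4R1} and, where helpful, from its differentiated form $\Upsilon''=m(1-a)\Upsilon^{1-2a}-\Upsilon$ of \eqref{LAOSndDRgsdd-pkm4R2}. All analysis is carried out on $[0,t_*]$ and transported to $[0,2t_*]$ through the even symmetry $\Upsilon(2t_*-t)=\Upsilon(t)$. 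When $m=0$, the quadrature reduces to $(\Upsilon')^2=1-\Upsilon^2$ with $\Upsilon(0)=0$ and $\Upsilon$ increasing near $0$, forcing $\Upsilon(t)=\sin t$; combined with $t_*=\pi/2$ (Lemma~\ref{athcrnothi}), the even reflection yields $\Upsilon=\sin$ on $[0,\pi]$. For $a=\tfrac12$, I would verify by direct substitution that $\phi(t):=\sin t+\tfrac{m}{2}(1-\cos t)$ satisfies $(\phi')^2=1+m\phi-\phi^2$ with $\phi(0)=0$, and then conclude $\Upsilon=\phi$ on $[0,t_*]$ from strict monotonicity of $\Psi$ (hence uniqueness of its inverse), closing the identity by even extension.

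\textbf{Subcritical range $a\in(0,1)$, $m\neq 0$.} Since $2(1-a)>0$, the quadrature gives $(\Upsilon'(0))^2=1$, and positivity of $\Upsilon'$ near $0$ forces $\Upsilon'(0)=1$; even symmetry then gives $\Upsilon'(2t_*)=-1$, which is \eqref{JOSLN-21wekp2dwfme}. Combining $\Upsilon(t)\sim t$ with $(\Upsilon')^2-1=m\Upsilon^{2(1-a)}-\Upsilon^2$ and using $2(1-a)<2$ to discard the $\Upsilon^2$ term produces \eqref{JOSLN-21wekp2dwfme-2}. When $a<\tfrac12$, the identity $\Upsilon''=m(1-a)\Upsilon^{1-2a}-\Upsilon$ immediately yields $\Upsilon''(0)=0$ and $\Upsilon''(t)/t^{1-2a}\to m(1-a)$ (since $\Upsilon(t)\sim t$), giving the positive part of \eqref{0ojlfw-COmdf-13}. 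When $a\in(\tfrac12,1)$, the asymptotic of $\Upsilon'-1$ furnishes the Hölder regularity $\Upsilon\in C^{1,2(1-a)}([0,2t_*])$, the positive part of \eqref{0ojlfw-COmdf-14}.

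\textbf{Supercritical range $a>1$.} Now $m>0$ (by \eqref{ASSUNBGGIUNPEYBA} and $m\neq 0$) and $2(1-a)<0$, so $\psi(Y)\sim mY^{2(1-a)}$ as $Y\searrow 0$; hence
\[\Psi(y)=\int_0^y\frac{dY}{\sqrt{\psi(Y)}}\sim\frac{1}{\sqrt{m}}\int_0^y Y^{a-1}\,dY=\frac{y^a}{a\sqrt{m}}\qquad\text{as }y\searrow 0.\]
Inverting this relation yields $\Upsilon(t)\sim a^{1/a}\,m^{1/(2a)}\,t^{1/a}$, the first part of \eqref{0ojlfw-COmdf-15}.

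\textbf{Main obstacle.} The hardest part is the sharpness of the Hölder exponents: $\Upsilon\notin C^{2,\xi}$ for $\xi>1-2a$ when $a<\tfrac12$, $\Upsilon\notin C^{1,\xi}$ for $\xi>2(1-a)$ when $a\in(\tfrac12,1)$, and $\Upsilon\notin C^\xi$ for $\xi>1/a$ when $a>1$. In each case the strategy is to show that the relevant Hölder quotient diverges as $t\searrow 0$ once the exponent exceeds the critical value, using the leading asymptotic just extracted. The technical care lies in controlling the remainders of $\sqrt{\psi(Y)}$ uniformly on a small neighborhood of $0$: one has to factor out $Y^{2(1-a)}$ (or $Y^{a-1}$ when $a>1$) before Taylor-expanding the square root, so that the expansion is performed around a bounded quantity rather than around the singular leading term.
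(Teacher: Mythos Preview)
Your plan is essentially correct and closely parallels the paper's proof, with one pleasant streamlining: for $a=\tfrac12$ you verify the candidate $\phi(t)=\sin t+\tfrac{m}{2}(1-\cos t)$ directly in the quadrature and appeal to uniqueness of the inverse of $\Psi$, whereas the paper evaluates $\Psi$ in closed form via an $\arctan$ substitution, obtains a quadratic in $\Upsilon(t)$, factors it, and then argues by contradiction to select the correct branch. Your route is shorter; just make sure to check that $\phi'>0$ on $(0,t_*)$ (so that $\Psi(\phi(t))$ is well defined and differentiable there) before invoking uniqueness.

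You do, however, have the difficulty backwards in your ``main obstacle'' paragraph. The \emph{non}-membership statements are the easy direction: once you know, say, $\lim_{t\searrow 0}\Upsilon''(t)/t^{1-2a}=(1-a)m\neq 0$, it is immediate that $(\Upsilon''(t)-\Upsilon''(0))/t^{\xi}\to\infty$ for any $\xi>1-2a$, ruling out $C^{2,\xi}$; and likewise in the other ranges. What actually needs work is the \emph{membership} $\Upsilon\in C^{2,1-2a}$ (resp.\ $C^{1,2(1-a)}$, $C^{1/a}$): a pointwise asymptotic at $t=0$ does not by itself give a uniform H\"older bound on a neighbourhood. The paper handles this by passing to one more derivative---computing $\Upsilon'''$ when $a<\tfrac12$, $\Upsilon''$ when $a\in(\tfrac12,1)$, and $\Upsilon'$ when $a>1$---showing that the relevant derivative is $O(t^{-\alpha})$ for the appropriate $\alpha$, and then integrating from $t_1$ to $t_2$ to recover $|F(t_2)-F(t_1)|\le C|t_2-t_1|^{1-\alpha}$. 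Your remark about factoring out $Y^{2(1-a)}$ before expanding $\sqrt{\psi}$ is aimed at the wrong target; redirect that care toward the membership direction, where your current write-up (``immediately yields'', ``furnishes the H\"older regularity'') glosses over the only genuinely nontrivial step.
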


\begin{proof}We focus on the regularity theory at~$t=0$, since the one at~$t=2t_*$ can be
inferred from that by symmetry, owing to~\eqref{UJND-siym}.

To this end, when~$m=0$ we have that~$t_*=\frac\pi2$, thanks to~\eqref{0p12543689oi2rjetghm-2r3tgrtititiam-2},
whence, for all~$t\in\left(0,\frac\pi2\right)$,
$$ t=\Psi(\Upsilon(t))=
\int_0^{\Upsilon(t)}\frac{dY }{ \sqrt{1-Y^2} }=\arcsin\Upsilon(t),
$$
therefore~$\Upsilon(t)=\sin t$, and this holds for all~$t\in(0,\pi)$, using the parity of~$\Psi$
across~$t=t_*=\frac\pi2$, as claimed.

Hence, we now suppose that~$m\ne0$. When~$a=\frac12$, we have that
$$ 0=\psi(y_*)=1+my_*-y_*^2$$
and therefore$$y_*=\frac{m + \sqrt{m^2+4}}{2}.$$
Consequently,
\begin{eqnarray*}&& t_*=\Psi(y_*)=\int_0^{y_*}\frac{dY }{ \sqrt{1+mY-Y^2} }
=\arctan\frac{m }{2}-\arctan\frac{m - 2y_*}{2\sqrt{1 + m y_* - y_*^2}}\\&&\qquad
=\arctan\frac{m }{2}+\arctan\frac{\sqrt{m^2+4}}{0^+}=\arctan\frac{m }{2}+\arctan(+\infty)=
\arctan\frac{m }{2}+\frac\pi2\in(0,\pi).
\end{eqnarray*}

Furthermore, for all~$t\in(0,t_*)$,
$$ t=\Psi(\Upsilon(t))=
\int_0^{\Upsilon(t)}\frac{dY }{ \sqrt{1+mY-Y^2} }=
\arctan\frac{m }{2}-\arctan\frac{m - 2 \Upsilon(t)}{2\sqrt{1 + m \Upsilon(t) - \Upsilon^2(t)}}
$$
and therefore
$$ \frac{m - 2 \Upsilon(t)}{2\sqrt{1 + m \Upsilon(t) - \Upsilon^2(t)}}=\tan T,$$
where $$
T:=\arctan\frac{m }{2}-t\in
\left(\arctan\frac{m }{2}-t_*,\arctan\frac{m }{2} \right)=\left(-\frac{\pi }{2},\arctan\frac{m }{2} \right)\subseteq
\left(-\frac{\pi }{2}, \frac{\pi }{2} \right)
.$$
This gives that
$$ \frac{m^2 - 4m \Upsilon(t)+4\Upsilon^2(t)}{4\tan^2 T}=1 + m \Upsilon(t) - \Upsilon^2(t),$$
and, as a result,
\begin{equation}\label{pkjqodwldv-2r3otikgrjh}
\begin{split}
0\,&=\, \Upsilon^2(t) - m \Upsilon(t)+ \frac{m^2}4 \cos^2 T -\sin^2 T
\\&=\,\left(\Upsilon(t) -\frac{m}{2}\right)^2- \frac{m^2+4}4 \sin^2 T\\
&=\,\left(\Upsilon(t) -\frac{m}{2}\right)^2- \frac{(m \cos t - 2\sin t)^2}4\\
&=\,\left(\Upsilon(t) -\frac{m}{2}\right)^2- \left(\frac{m}2 \cos t - \sin t\right)^2
\\&=\,\left(\Upsilon(t) -\frac{m}{2}-\frac{m}2 \cos t +\sin t\right)
\left(\Upsilon(t) -\frac{m}{2}+\frac{m}2 \cos t -\sin t\right)\\&=\,\left(\Upsilon(t) -\frac{m}{2}(1+ \cos t) +\sin t\right)
\left(\Upsilon(t) -\frac{m}{2}(1- \cos t) -\sin t\right)
.\end{split}\end{equation}
This yields that~$\Upsilon$ has the form stated in~\eqref{0ojlfw-COmdf-12}.
To check this, let us argue by contradiction and suppose that~$\Upsilon$ does
not agree with the form in~\eqref{0ojlfw-COmdf-12} in some interval~$(t_1,t_2)$,
with~$0\le t_1<t_2\le t_*$, and let us suppose that such interval is as large as possible.
Then, in view of~\eqref{pkjqodwldv-2r3otikgrjh}, we know that, for every~$t\in(t_1,t_2)$,
$$ \Upsilon(t) =\frac{m}{2}(1+ \cos t) -\sin t$$
and in particular~$\Upsilon(t_1)=\frac{m}{2}(1+ \cos t_1) -\sin t_1$.

Hence, necessarily~$t_1>0$, otherwise we would have that
$$0=\Upsilon(0)=\frac{m}{2}(1+ \cos 0) -\sin 0=m,$$
against our assumptions.

This observation and~\eqref{pkjqodwldv-2r3otikgrjh} give that
$$ \Upsilon(t)=\begin{cases}\displaystyle
\frac{m}{2}(1- \cos t) +\sin t&{\mbox{ if }}t\in(0,t_1),
\\ \\ \displaystyle\frac{m}{2}(1+ \cos t) -\sin t &{\mbox{ if }}t\in(t_1,t_2).
\end{cases}$$
Since, by~\eqref{OSKM-23}, we know that~$\Upsilon$ is~$C^1$ in a neighborhood of~$t_1$, we thereby find that
\begin{eqnarray*}
\frac{m}{2}\sin t_1 +\cos t_1=
\lim_{t\nearrow t_1}\Upsilon'(t)=
\lim_{t\searrow t_1}\Upsilon'(t)=-\frac{m}{2}\sin t_1 -\cos t_1,
\end{eqnarray*}
leading to
$$ 0=\frac{m}{2}\sin t_1 +\cos t_1
=\frac{m}{2}\cos \left(t_1-\frac\pi2\right) -\sin\left(t_1-\frac\pi2\right)
$$
and thus~$t_1=\frac\pi2+\arctan\frac{m}2=t_*$, once again in contradiction with our assumptions.
This gives~\eqref{0ojlfw-COmdf-12}, as desired.

Now we point out that, for~$t\in(0,t_*)$,
$$ 1=\frac{d}{dt} t=\frac{d}{dt}\Big(\Psi(\Upsilon(t))\Big)=\frac{d}{dt}\left(
\int_0^{\Upsilon(t)}\frac{dY }{ \sqrt{1+m\,Y^{2(1-a)}-Y^2} }\right)=
\frac{\Upsilon'(t) }{ \sqrt{1+m\,\Upsilon^{2(1-a)}(t)-\Upsilon^2(t)} },
$$
whence
\begin{equation}\label{I0-p;lbfX-4hk}
\Upsilon'(t) = \sqrt{1+m\,\Upsilon^{2(1-a)}(t)-\Upsilon^2(t)} .
\end{equation}
Therefore, if~$a\in(0,1)$, it follows that~$\Upsilon'$ is continuous at~$t=0$
and~$\Upsilon'(0)=1$, giving~\eqref{JOSLN-21wekp2dwfme}.

As a result, $\Upsilon(t)=t+o(t)$ as~$t\searrow0$, whence, by~\eqref{I0-p;lbfX-4hk},
\begin{equation}\label{09j0k9-0-2rkgfmm-PKMS3d}
\begin{split}
\Upsilon'(t)\,& =\, \sqrt{1+m\,\big(t+o(t)\big)^{2(1-a)}-\big(t+o(t)\big)^2} \\&=\,
1+\frac{m\,t^{2(1-a)}\big(1+o(1)\big)^{2(1-a)}-t^2\big(1+o(1)\big)^2}{2},
\end{split}\end{equation}
leading to~\eqref{JOSLN-21wekp2dwfme-2}.

We also deduce from~\eqref{09j0k9-0-2rkgfmm-PKMS3d} that,
as~$t\searrow0$,
$$ \Upsilon'(t)=
1+\frac{m\,t^{2(1-a)}}{2}+o(t^{2(1-a)})
$$
and thus
$$ \big(\Upsilon'(t)\big)^2=
1+ m\,t^{2(1-a)}+o(t^{2(1-a)})
.$$
As a consequence, by~\eqref{KMSeQUPSIKM-4},
\begin{equation}\label{0uwofehdvfb-23r4tyt}\begin{split}
0\,&=\frac{\Upsilon^2(t)+\Upsilon(t)\, \Upsilon''(t)+(a-1)\big(\Upsilon^2(t)+(\Upsilon'(t))^2-1\big)}t\\&=
\big(1+o(1)\big)\Upsilon''(t)+(a-1)\big(m\,t^{1-2a}+o(t^{1-2a })\big)+O(t)\\
&=\big(1+o(1)\big)
\, \Upsilon''(t)+(a-1)m\,t^{1-2a}+o(t^{1-2a}).\end{split}
\end{equation}

Let us now assume that~$a\in\left(0,\frac12\right)$. Then, the asymptotics in~\eqref{0uwofehdvfb-23r4tyt}
shows that~$\Upsilon\in C^2([0,2t_*])$, with~$\Upsilon''(0)=0$ and
\begin{equation}\label{0uwofehdvfb-23r4tyt-ru} \lim_{t\searrow0}\frac{\Upsilon''(t)-\Upsilon''(0)}{t^{1-2a}}
=(1-a)m.\end{equation}
Furthermore, employing~\eqref{KMSeQUPSIKM-4} for taking one more derivative, for small~$t>0$ we have that
\begin{eqnarray*} 
\Upsilon'''(t)&=&\frac{d}{dt}\big(\Upsilon''(t)\big)\\
&=&-\frac{d}{dt}\left(
a\Upsilon(t)+\frac{(a-1)\big((\Upsilon'(t))^2-1\big)}{\Upsilon(t)}\right)
\\&=&
-a\Upsilon'(t)+\frac{2(1-a) \Upsilon'(t)\Upsilon''(t)}{\Upsilon(t)}
+\frac{(a-1)\big((\Upsilon'(t))^2-1\big)\Upsilon'(t)}{\Upsilon^2(t)}\\&=&
-a\big(1+o(1)\big)+\frac{2(1-a) \big(1+o(1)\big)\big( (1-a)m\,t^{1-2a}+o(t^{1-2a})\big)}{t+o(t)}\\&&\qquad
+\frac{(a-1)\big(
m\,t^{2(1-a)}+o(t^{2(1-a)})
\big)\big(1+o(t)\big)}{t^2+o(t^2)}\\&=&-a+
(a-1)(2a-1)mt^{-2a}+o(t^{-2a}).
\end{eqnarray*}
For this reason,
if~$\hat{t}>0$ is sufficiently small and~$0<t_1<t_2<\hat{t}$, we infer that
\begin{eqnarray*}&&
|\Upsilon''(t_2)-\Upsilon''(t_1)|\le
\int_{t_1}^{t_2}|\Upsilon'''(t)|\,dt\le a\int_{t_1}^{t_2}dt+ 2(1-a)(1-2a)m
\int_{t_1}^{t_2}t^{-2a}\,dt\\&&\qquad=a(t_2-t_1)+
\frac{2(1-a)(1-2a)m}{1-2a}\big(t_2^{1-2a}-t_1^{1-2a}\big)\le C(t_2-t_1)^{1-2a},
\end{eqnarray*}
for some~$C>0$ depending only on~$a$ and~$m$, which shows that~$\Upsilon\in C^{2,1-2a}([0,2t_*])$.

Additionally, if~$\xi>1-2a$
$$ \lim_{t\searrow0}\frac{\Upsilon''(t)-\Upsilon''(0)}{t^\xi}=(1-a)m\lim_{t\searrow0}t^{1-2a-\xi}=+\infty,
$$
due to~\eqref{0uwofehdvfb-23r4tyt-ru},
hence~$\Upsilon\not\in C^{2,\xi}([0,2t_*])$.
The proof of~\eqref{0ojlfw-COmdf-13} is thereby complete.

Let us now deal with the case~$a\in\left(\frac12,1\right)$. In this situation, we deduce from~\eqref{0uwofehdvfb-23r4tyt} that
$$\Upsilon''(t)=(1-a)m\,t^{1-2a}+o(t^{1-2a})$$
and consequently, 
if~$\hat{t}>0$ is sufficiently small and~$0<t_1<t_2<\hat{t}$,
\begin{eqnarray*}&&|\Upsilon'(t_2)-\Upsilon'(t_1)|\le\int_{t_1}^{t_2}|\Upsilon''(t)|\,dt\le2(1-a)m
\int_{t_1}^{t_2}t^{1-2a}\,dt\\&&\qquad=
2(1-a)m\big(t^{2(1-a)}_2-t^{2(1-a)}_1\big)\le C(t_2-t_1)^{2(1-a)},
\end{eqnarray*}
showing that~$\Upsilon\in C^{1,2(1-a)}([0,2t_*])$.

However, if~$\xi>2(1-a)$,
$$ \lim_{t\searrow0}\frac{\Upsilon'(t)-\Upsilon'(0)}{t^{\xi}}=+\infty,$$
due to~\eqref{JOSLN-21wekp2dwfme-2}, hence~$\Upsilon\not\in C^{1,\xi}([0,2t_*])$.
We have therefore completed the proof of~\eqref{0ojlfw-COmdf-14}.

Now, we assume that~$a>1$. In this case, we have that
$$ \lim_{t\searrow0}\Upsilon^{1-a}(t)=+\infty,$$
therefore, for small~$t>0$, it is convenient to write~\eqref{I0-p;lbfX-4hk} in the form
\begin{equation}\label{pqjwlfe-o2ekr3efmg-20p3rtg-001} \Upsilon'(t) = 
\Upsilon^{1-a}(t)
\sqrt{m+\Upsilon^{2(a-1)}(t)-\Upsilon^{2a}(t)}=\big(\sqrt{m}+o(1)\big)\Upsilon^{1-a}(t)
\end{equation}
and accordingly
$$ \frac{d}{dt} \big(\Upsilon^a(t)\big)=a \Upsilon^{a-1}(t)\Upsilon'(t)=a
\sqrt{m}+o(1).$$
This entails that
$$ \Upsilon^a(t)=a\sqrt{m}\,t+o(t)$$
and therefore
\begin{equation}\label{pqjwlfe-o2ekr3efmg-20p3rtg-002} \Upsilon(t)=\big(a\sqrt{m}\,t+o(t)\big)^{\frac1a}=
\big(a^{\frac1{a}}{m}^{\frac1{2a}}+o(1)\big)\,t^{\frac1a}.\end{equation}
This shows that~$\Upsilon\not\in C^\xi([0,2t_*])$ when~$\xi>\frac1a$.

In addition, in light of~\eqref{pqjwlfe-o2ekr3efmg-20p3rtg-001} and~\eqref{pqjwlfe-o2ekr3efmg-20p3rtg-002},
$$
\Upsilon'(t)=\big(\sqrt{m}+o(1)\big)
\big(a^{\frac1{a}}{m}^{\frac1{2a}}+o(1)\big)^{1-a}\,t^{\frac{1-a}a}=
\big(a^{\frac{1-a}a}{m}^{\frac1{2a}}+o(1)\big)\,t^{\frac{1-a}a}.
$$
Owing to this, if~$\hat{t}>0$ is sufficiently small and~$0<t_1<t_2<\hat{t}$,
\begin{eqnarray*}&&|\Upsilon(t_2)-\Upsilon(t_1)|\le\int_{t_1}^{t_2}|\Upsilon'(t)|\,dt\le
2 a^{\frac{1-a}a}{m}^{\frac1{2a}}\,\int_{t_1}^{t_2}t^{\frac{1-a}a}\,dt\\&&\qquad\qquad
=2 a^{\frac1{a}}{m}^{\frac1{2a}} \big( t^{\frac1a}_2-t^{\frac1a}_1\big)\le C(t_2-t_1)^{\frac1a},
\end{eqnarray*}
for some~$C>0$, which demonstrates that~$\Upsilon \in C^{\frac1a}([0,2t_*])$.
This ends the proof of~\eqref{0ojlfw-COmdf-15}.
\end{proof}

\section{ODE methods}\label{SEC2}

This section contains some bespoke results on solutions of ordinary differential equations
which rely on the preliminary work done in the previous sections and
will be used in Section~\ref{SEC3} to establish Theorems~\ref{APPDE-TH-BIS} and~\ref{0o-2r-rtegripoHSdYjmsd-003PsKAM-l}.

\begin{lemma}\label{SIG}
Let~$a>0$, with~$a\ne1$. 
Let~$T_0>0$ and~$y\in C([0,T_0])\cap C^2((0,T_0))$ be a solution of
\begin{equation}\label{C:PB}
\begin{cases}
y^2+y y''+(a-1)(y^2+(y')^2-1)=0,\\
y(0)=0,\\
y(t)>0 {\mbox{ for all }}t\in(0,T_0).
\end{cases}
\end{equation}

Then, either
\begin{equation}\label{SOL:P} y(t)=\begin{cases}
\sin t& {\mbox{ if }}a\ne1/2,\\
\sin t + c \,\big(1- \cos t\big) & {\mbox{ if }}a=1/2,
\end{cases}\end{equation}
with~$c\in\R$,
or~$y(t)$ is implicitly defined by the relation
\begin{equation}\label{OKMScoiJNSM45e63r8KAS-0}
t=\int_0^{y(t)} \frac{dY }{ \sqrt{1+m\,Y^{2(1-a)}-Y^2} },
\end{equation}
for some~$m\in\R$, with~$m\ge0$ if~$a>1$.
\end{lemma}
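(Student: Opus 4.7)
The plan is to derive a first integral of the ODE in~\eqref{C:PB}, reducing it to the first-order equation of~\eqref{zombi3}. On~$(0,T_0)$, where~$y>0$, I will introduce the quantity
$$E(t):=\bigl((y'(t))^2+y(t)^2-1\bigr)\,y(t)^{2(a-1)},$$
and differentiate using the product rule; a short computation will give
$$E'(t)=2\,y(t)^{2a-3}\,y'(t)\,\Bigl\{y(t)y''(t)+a\,y(t)^2+(a-1)(y'(t))^2-(a-1)\Bigr\},$$
and the expression in braces is precisely the ODE in~\eqref{C:PB} after expansion of~$(a-1)y^2$ and regrouping. Hence~$E\equiv m$ for some constant~$m\in\R$, producing the first integral
\begin{equation}\label{PLAN-FI}
(y'(t))^2 \,=\, 1+m\,y(t)^{2(1-a)}-y(t)^2 \qquad\text{on }(0,T_0).
\end{equation}

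Next I will extract restrictions on~$m$ and the sign of~$y'$. When~$a>1$, the factor~$y^{2(1-a)}$ blows up as~$t\searrow0$, and non-negativity of the right-hand side of~\eqref{PLAN-FI} forces~$m\ge 0$. Since~$y(0)=0$ and~$y>0$ on~$(0,T_0)$, the function~$y'$ must be strictly positive on a right-neighborhood of~$0$; specifically,~\eqref{PLAN-FI} gives~$y'(0^+)=1$ when~$a\in(0,1)$ or when~$a>1$ with~$m=0$, and~$y'(0^+)=+\infty$ when~$a>1$ with~$m>0$. Let~$T_1\in(0,T_0]$ denote the first zero of~$y'$ in~$(0,T_0)$, or set~$T_1=T_0$ if none exists. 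On~$(0,T_1)$ I will take the positive square root in~\eqref{PLAN-FI}, separate variables, and integrate from~$0$ to~$t$, obtaining the implicit relation~\eqref{OKMScoiJNSM45e63r8KAS-0}. If~$T_1<T_0$, then~$y'(T_1)=0$ while~$y(T_1)>0$, so the ODE becomes regular at~$T_1$; uniqueness of the Cauchy problem then yields~$y(T_1+s)=y(T_1-s)$ locally, and iterating this even reflection identifies~$y$ with a restriction of the function~$\Upsilon(\cdot,m)$ constructed in Section~\ref{KPLS-wpofepjobfkpowfh}, which satisfies~\eqref{OKMScoiJNSM45e63r8KAS-0} on each monotone arc.

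Finally, I will read off the three explicit subcases. For~$a=1/2$, the integrand~$1/\sqrt{1+mY-Y^2}$ is elementary: completing the square~$1+mY-Y^2=(1+m^2/4)-(Y-m/2)^2$ and solving the resulting arcsine identity for~$y$ yields~$y(t)=\sin t+\tfrac{m}{2}(1-\cos t)$, which is~\eqref{SOL:P} with~$c=m/2$ (this is already essentially carried out in~\eqref{0ojlfw-COmdf-12} and~\eqref{pkjqodwldv-2r3otikgrjh}). For~$a\ne 1/2$ with~$m=0$, the integral in~\eqref{OKMScoiJNSM45e63r8KAS-0} reduces to~$\arcsin y$, so~$y(t)=\sin t$. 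For~$a\ne 1/2$ with~$m\ne 0$, no further simplification is available, and~\eqref{OKMScoiJNSM45e63r8KAS-0} itself is the stated conclusion.

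I expect the main obstacle to be the reflection argument at an interior critical point~$T_1<T_0$: one must verify that the even extension predicted by uniqueness at~$T_1$ is the only possible continuation and that the conservation law extends across the reflection with the same constant~$m$, so that~$y$ continues to satisfy~\eqref{OKMScoiJNSM45e63r8KAS-0} (suitably interpreted) on successive monotone branches matching those of~$\Upsilon(\cdot,m)$. The derivation of~\eqref{PLAN-FI} and the explicit integrations in the~$a=1/2$ and~$m=0$ cases are, by contrast, essentially routine once the first integral is in hand.
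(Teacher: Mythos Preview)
Your approach is correct and reaches the same conclusion as the paper, but via a somewhat cleaner route. The paper introduces~$w:=y^2+(y')^2-1$, first disposes of the degenerate case~$w\equiv 0$ on an open interval (obtaining~$y=\sin t$ directly), and in the non-degenerate case invokes analyticity to show that~$\{y'=0\}$ is discrete, then on each component of~$\{y'\ne0\}$ computes~$(\log|w|)'=2(1-a)(\log y)'$ and finally matches the resulting constants~$m_i$ across components. Your choice of the conserved quantity~$E=w\,y^{2(a-1)}$ and the pointwise verification that~$E'=0$ on all of~$(0,T_0)$ collapses this case analysis into one line: neither the vanishing of~$w$ nor of~$y'$ obstructs the computation, and the constant~$m$ is global from the outset---so the concern you raise about carrying~$m$ across the reflection at~$T_1$ is already resolved by your own argument. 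The paper's route has the minor advantage of isolating the~$m=0$ (equivalently~$w\equiv0$) case up front, whereas in your scheme it emerges as a special evaluation of the implicit formula; conversely, your scheme avoids the appeal to real-analyticity. Your reflection step at~$T_1$ via uniqueness of the regular Cauchy problem is exactly what the paper uses (its~\eqref{987651324sdcvvaysPJ}) to pass from a local identification~$y=\Upsilon$ near~$0$ to a global one.
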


\begin{proof} If~$y$
has the form claimed in~\eqref{SOL:P},
then it solves~\eqref{C:PB} by a direct computation. Furthermore, if~$y$
is as in~\eqref{OKMScoiJNSM45e63r8KAS-0}, then it solves~\eqref{C:PB},
due to Proposition~\ref{3.3p}.

Hence, it remains to prove that if
$y$ solves~\eqref{C:PB}, then it is of the form claimed in either~\eqref{SOL:P} or~\eqref{OKMScoiJNSM45e63r8KAS-0}.
To establish this, we first observe that if~$\widetilde{y}\in C([0,T_0])\cap C^2((0,T_0))$ is a solution of~\eqref{C:PB},
then, by the uniqueness result for regular Cauchy problems, we deduce that 
\begin{equation}\label{987651324sdcvvaysPJ}
\begin{split}&
{\mbox{if~$y(t)=\widetilde{y}(t)$
for all~$t$ in an interval~$I\subsetneqq[0,T_0]$,}}\\&{\mbox{then~$y(t)= \widetilde{y}(t)$
for all~$t\in[0,T_0]$.}}
\end{split}\end{equation}

As a consequence, in light of~\eqref{987651324sdcvvaysPJ}, it is sufficient
to prove that if
$y$ solves~\eqref{C:PB}, then it is of the form claimed in either~\eqref{SOL:P} or~\eqref{OKMScoiJNSM45e63r8KAS-0}
for all~$t$ in a suitable interval.

To this end, we observe that
\begin{equation}\label{YNOTCONS}
{\mbox{$y$ cannot be constant in an open interval.}}
\end{equation}
Indeed, suppose by contradiction that~$y(t)=c_0$ for all~$t$
in an open interval~$I$. Then, by~\eqref{C:PB}, for all~$t\in I$,
$$ 0=c_0^2+0+(a-1)(c_0^2+0-1)=ac_0^2-a+1.
$$
In particular, necessarily~$a\ne0$, and then~$c_0^2=\frac{a-1}a\ne0$.
This says that~$y$ is equal to~$c_0\ne0$ in an open interval and we can therefore divide by~$y$ in the ordinary differential
equation in~\eqref{C:PB} and extend the solution. But then, using the initial value in~\eqref{C:PB}, we see that~$0=y(0)=c_0\ne0$,
which is a contradiction and~\eqref{YNOTCONS} is proved.

Now, we define
\begin{equation}\label{825ug455} w(t):=
y^2(t)+(y'(t))^2-1.\end{equation}
We first suppose that $w$ vanishes identically in an open interval~$I$.
In this case, for all~$t\in I$,
$$ (y'(t))^2=1-y^2(t).$$
Also, by~\eqref{YNOTCONS}, we can find an interval~$I'\subseteq I$ in which~$y'\ne0$.
Thus we conclude that, for every~$t\in I'$,
$$b y'(t)=\sqrt{1-y^2(t)},$$
with~$b\in\{-1,1\}$ and accordingly
$$ \frac{d}{dt} \Big(b\arcsin y(t)-t\Big)=\frac{by'(t)}{\sqrt{1-y^2(t)}}-1=0.$$
{F}rom this we arrive at
\begin{equation}\label{LS90o3t-439yi54-25654754Dpoj3}
{\mbox{$y(t)=\sin\frac{t}{b}+\bar{c}=
b\sin t+\bar{c}$ for all~$t\in I'$}},\end{equation} where~$\bar{c}\in\R$.
Consequently, by~\eqref{C:PB}, for all~$t\in I'$,
\begin{equation}\label{AKSM:-we}
\begin{split}
0\,&=(b\sin t+\bar{c})^2-b(b\sin t+\bar{c}) \sin t+(a-1)\big((b\sin t+\bar{c})^2+\cos^2 t-1\big)\\
&=\bar{c} (a\bar{ c}+b( 2 a-1) \sin t).
\end{split}\end{equation}
This gives that
\begin{equation}\label{kamms-:aPKM}
\bar{ c}=0.\end{equation}
Because, if not, we deduce from~\eqref{AKSM:-we}
that the real analytic function~$a\bar{ c}+b(2 a-1) \sin t$ vanishes for all~$t\in I'$
and so, by analytic continuation, for all~$t\in\R$. Hence, taking~$t\in\left\{0,\frac\pi2\right\}$,
$$ a\bar{c}=0\qquad{\mbox{and}}\qquad a\bar{c}+b(2a-1)=0,$$
yielding that~$a=\frac12$ and then~\eqref{kamms-:aPKM}, as desired.

In light of~\eqref{kamms-:aPKM}, we deduce that~\eqref{LS90o3t-439yi54-25654754Dpoj3}
boils down to~$y(t)=b\sin t$ for all~$t\in I'$. Actually, by~\eqref{987651324sdcvvaysPJ},
we have that~$y(t)=b\sin t$ for all~$t\in[0,T_0]$.
Now, if~$b=-1$, then we obtain a
contradiction with the assumption that~$y(t)>0$. Therefore, we conclude that~$b=1$ and
accordingly~$y(t)=\sin t$ for all~$t\in[0,T_0]$, which is of the form claimed in~\eqref{SOL:P}.

Thus, from now on, we can assume that
\begin{equation}\label{tbsinSfoSll}
{\mbox{$w$ cannot be identically zero in an open interval.}}
\end{equation}
In this setting, we recall~\eqref{YNOTCONS} and we deduce that~$y'$ cannot be identically zero
in an open interval.
Furthermore, since~$y$ is analytic in~$(0,T_0)$ (being the solution of an
analytic Cauchy problem, see e.g. page~124 in~\cite{MR1707333}), we have that~$y'$ is
analytic in~$(0,T_0)$ as well, and therefore the set~$\{y'=0\}$ cannot
have accumulation points in~$(0,T_0)$.

As a consequence of this observation, we have that
there exists~${\mathcal{I}}\subset\N$ such that
\begin{equation}\label{3v436547y658u76ijhgtruhsgferHHHHH}
\{y'\neq 0\}=\bigcup_{i\in{\mathcal{I}}} (\theta_{i+1},\theta_i),
\end{equation}
where~$\theta_0=T_0$ and~$\theta_{i+1}\in[0,\theta_i)$.

We now claim that
\begin{equation}\label{KJSNd kfwehibkCCVBSDf oq934p5irt-0} (y'(t))^2=1+m\,y^{2(1-a)}(t)-y^2(t)
\quad{\mbox{for all }}t\in (0,T_0),
\end{equation}
for some~$m\in\R\setminus\{0\}$.

To prove it, we observe that
\begin{equation} \label{650234cdr}
w'=2yy'+2y'y''=2y'(y+y'').\end{equation}
Hence, in every interval of the form~$(\theta_{i+1},\theta_i)$,
we can divide by~$2y'$ and find that
\begin{equation*} \frac{w'}{2y'}= y+y''.\end{equation*}
This and~\eqref{C:PB} 
give that
\begin{equation*} 0=
y^2+y y''+(a-1)w=
y(y+ y'')+(a-1)w=\frac{w'y}{2y'}+(a-1)w,
\end{equation*}
which produces
$$ \frac{d}{dt}(\log |w|)=\frac{w'}{w}=-2(a-1)\frac{y'}{y}
=2(1-a)\frac{d}{dt}(\log |y|).$$
As a result, for every~$\e_i\in(\theta_{i+1},\theta_i)$ such that~$w(\e_i)\ne0$ (whose existence is warranted by~\eqref{tbsinSfoSll}) we have that
$$ \log \frac{|w(t)|}{|w(\e_i)|}=
2(1-a) \log\frac{|y(t)|}{|y(\e_i)|}.$$
Hence, since~$y(t)>0$,
\begin{equation}\label{wqui} |w(t)|=|w(\e_i)|\,\left(\frac{|y(t)|}{|y(\e_i)|}\right)^{2(1-a)}=
\frac{|w(\e_i)|}{y^{2(1-a)}(\e_i)}\,y^{2(1-a)}(t)
.\end{equation}
Without loss of generality, we can assume that
\begin{equation}\label{tbsinSfoSll-2}
{\mbox{$w$ has a strict sign in the interval~$(\theta_{i+1},\theta_i)$,}}
\end{equation}
otherwise we can pick a sequence of points~$\e_k\in(\theta_{i+1},\theta_i)$
such that~$w(\e_k)\ne0$,
$\e_k\to\overline\e_i\in(\theta_{i+1},\theta_i)$ as~$k\to+\infty$
and~$w(\overline\e_i)=0$. This and~\eqref{wqui} give that, for every~$t\in(\theta_{i+1},\theta_i)$,
\begin{equation*} |w(t)|=\lim_{k\to+\infty}\frac{|w(\e_k)|}{y^{2(1-a)}(\e_k)}\,y^{2(1-a)}(t)=
\frac{|w(\overline\e_i)|}{y^{2(1-a)}(\overline\e_i)}\,y^{2(1-a)}(t)=0,
\end{equation*}
in contradiction with our statement in~\eqref{tbsinSfoSll}.
This proves~\eqref{tbsinSfoSll-2}.

Also, by~\eqref{825ug455}, \eqref{wqui} and~\eqref{tbsinSfoSll-2}, for every~$t\in(\theta_{i+1},\theta_i)$,
\begin{equation}\label{AKP:MS-2ekrfemg} y^2(t)+(y'(t))^2-1=w(t)=
\frac{w(\e_i)}{y^{2(1-a)}(\e_i)}\,y^{2(1-a)}(t)=m_i\,y^{2(1-a)}(t),
\end{equation}
where
$$m_i:=\frac{w(\e_i)}{y^{2(1-a)}(\e_i)}\in\R\setminus\{0\}.$$

We claim that
\begin{equation}\label{jdi4e7584v96809876435436-976503}
m_{i+1}=m_i\quad {\mbox{for all }} i\in{\mathcal{I}}.
\end{equation}
Indeed, since~\eqref{AKP:MS-2ekrfemg} holds true in~$(\theta_{i+1},\theta_i)$
with coefficient~$m_i$ and in~$(\theta_{i+2},\theta_{i+1})$ with coefficient~$m_{i+1}$, 
we have that
\begin{eqnarray*}
&& y^2(t)+(y'(t))^2-1-m_i\,y^{2(1-a)}(t)=0\qquad{\mbox{ for all }}\,t\in(\theta_{i+1},\theta_i)
\\{\mbox{and }}&&
y^2(t)+(y'(t))^2-1-m_{i+1}\,y^{2(1-a)}(t)=0\qquad{\mbox{ for all }}\,t\in(\theta_{i+2},\theta_{i+1}).
\end{eqnarray*}
For this reason,
\begin{eqnarray*}&& y^2(\theta_{i+1})+(y'(\theta_{i+1}))^2-1-m_i\,y^{2(1-a)}(\theta_{i+1})\\&&\qquad\qquad
= 0=y^2(\theta_{i+1})+(y'(\theta_{i+1}))^2-1-m_{i+1}\,y^{2(1-a)}(\theta_{i+1}),\end{eqnarray*}
which gives~\eqref{jdi4e7584v96809876435436-976503}.

As a consequence of~\eqref{jdi4e7584v96809876435436-976503}, we can set~$m:=m_i$,
recall~\eqref{3v436547y658u76ijhgtruhsgferHHHHH}
and obtain that the equation in~\eqref{KJSNd kfwehibkCCVBSDf oq934p5irt-0}
is satisfied in~$\{y'\neq0\}$, and thus in~$\overline{ \{y'\neq0\} }\cap (0,T_0)=(0,T_0)$.
This completes the proof of~\eqref{KJSNd kfwehibkCCVBSDf oq934p5irt-0}.

Now we claim that
\begin{equation}\label{UJSNlsd-w0eurgjhoeriwskbnoerkf3utjhyrt}\begin{split}&
{\mbox{if $a\in(0,1)$ then }}\lim_{t\searrow0} y'(t)=1,\\
&{\mbox{and if~$a>1$ then~$m>0$ and }}\lim_{t\searrow0} y'(t)=+\infty.
\end{split}
\end{equation}
To check this, let us first suppose that~$a\in(0,1)$. Then, by~\eqref{KJSNd kfwehibkCCVBSDf oq934p5irt-0},
$$ \lim_{t\searrow0}
(y'(t))^2=\lim_{t\searrow0} \Big( 1+m\,y^{2(1-a)}(t)-y^2(t)\Big)=1.$$
Since~$y$ is positive for small~$t$, this gives~\eqref{UJSNlsd-w0eurgjhoeriwskbnoerkf3utjhyrt}
in this case.

Let us now suppose that~$a>1$. Thus,
using~\eqref{KJSNd kfwehibkCCVBSDf oq934p5irt-0} we obtain that
\begin{equation}\label{plqjwdfeSST6ydfghytherglrll} \lim_{t\searrow0}
(y'(t))^2=\lim_{t\searrow0} \Big( 1+m\,y^{2(1-a)}(t)-y^2(t)\Big)=1+m\; \infty,\end{equation}
therefore, in this case, since the left hand side is nonnegative,
we have that~$m\in(0,+\infty)$.
Hence, we obtain from~\eqref{plqjwdfeSST6ydfghytherglrll} that
\begin{equation*}\lim_{t\searrow0}
(y'(t))^2=+ \infty\end{equation*}
and the claim in~\eqref{UJSNlsd-w0eurgjhoeriwskbnoerkf3utjhyrt}
follows since~$y$ is positive for small~$t$.

As a consequence of~\eqref{UJSNlsd-w0eurgjhoeriwskbnoerkf3utjhyrt} we obtain that,
if~$\eta>0$ is chosen appropriately small, then, for all~$t\in(0,\eta)$,
\begin{equation}\label{KJSNd kfwehibkCCVBSDf oq934p5irt}
y'(t)\in(0,+\infty].
\end{equation}

Thus, exploiting~\eqref{KJSNd kfwehibkCCVBSDf oq934p5irt-0} and~\eqref{KJSNd kfwehibkCCVBSDf oq934p5irt}, we find that, for every~$t\in(0,\eta)$,
\begin{equation}\label{KJSNd kfwehibkCCVBSDf oq934p5irt-1}
y'(t)=\sqrt{1+m\,y^{2(1-a)}(t)-y^2(t)}.
\end{equation}

Now we recall the function~$\Psi$ introduced in~\eqref{PKS-INmndsoitcrevES-0} and we claim that, for every~$t\in[0,\eta)$,
\begin{equation}\label{KJSNd kfwehibkCCVBSDf oq934p5irt-2}
\Psi(y(t))=t.
\end{equation}
Indeed,
\begin{eqnarray*}
\lim_{t\searrow0} \Big(\Psi(y(t))-t\Big)=
\lim_{r\searrow0}
\int_0^r \frac{dY }{ \sqrt{1+m\,Y^{2(1-a)}-Y^2} }=0
\end{eqnarray*}
and, in view of~\eqref{KJSNd kfwehibkCCVBSDf oq934p5irt-1},
\begin{eqnarray*}
\frac{d}{dt} \Big(\Psi(y(t))-t\Big)=\Psi'(y(t))\,y'(t)-1=
\frac{y'(t) }{ \sqrt{1+m\,y^{2(1-a)}(t)-y^2(t)} }-1=0.
\end{eqnarray*}
These observations establish~\eqref{KJSNd kfwehibkCCVBSDf oq934p5irt-2}, as desired.

{F}rom~\eqref{KJSNd kfwehibkCCVBSDf oq934p5irt-2}, we deduce that, for every~$t\in[0,\eta)$,
the solution~$y(t)$ must coincide with the inverse function~$\Upsilon(t)$ of~$\Psi$,
as detailed in~\eqref{JSNxatgbsdLSCItgbdy}. This gives that~$y$ is as in~\eqref{OKMScoiJNSM45e63r8KAS-0}. In particular, if~$a=1/2$, $y(t)$ is as in~\eqref{SOL:P},
thanks to~\eqref{0ojlfw-COmdf-12} in Proposition~\ref{prop:basefier}.
\end{proof}

\begin{figure}[h] 
\includegraphics[width=0.37\textwidth]{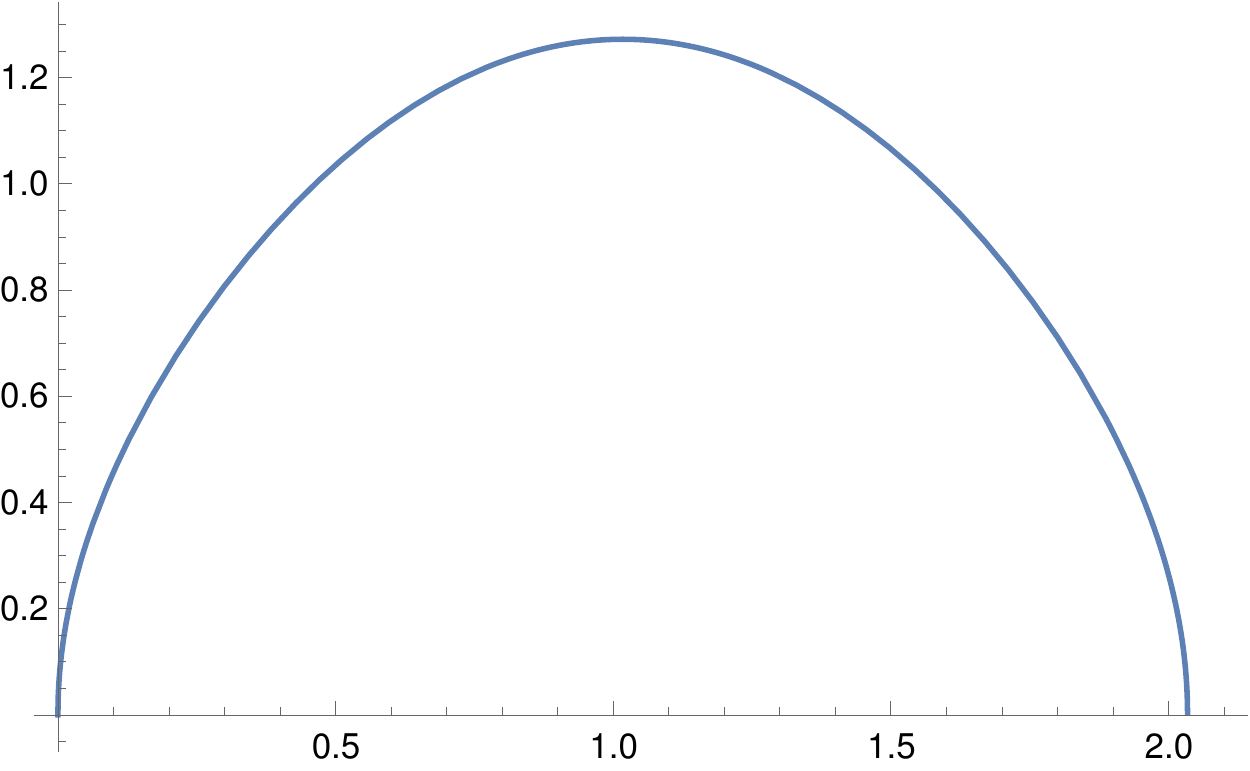} 
\caption{\footnotesize\sl The function in~\eqref{YokmEXV} for~$m:=1$.}
\label{FYokmEXVG1}\end{figure}

{ \begin{remark}\label{UA:SOLUN-i}
{\rm We stress that some explicit solutions
can be found among those presented in~\eqref{OKMScoiJNSM45e63r8KAS-0}. Namely, if~$a:=2$
and~$m>0$ then~\eqref{OKMScoiJNSM45e63r8KAS-0} reads
\begin{eqnarray*}
t=\int_0^{y(t)} \frac{Y\,dY }{ \sqrt{Y^2+m-Y^4} }
.\end{eqnarray*}
Since a primitive of~$\frac{Y}{ \sqrt{Y^2+m-Y^4} }$ is given by~$-\frac12\arctan\frac{1 - 2 Y^2}{2 \sqrt{m + Y^2 - Y^4}}$, we find that
\begin{eqnarray*}
t=\frac12\left(
\arctan\frac{1}{2 \sqrt{m}}
-\arctan\frac{1 - 2 y^2(t)}{2 \sqrt{m + y^2(t) - y^4(t)}}\right)
\end{eqnarray*}
and therefore
\begin{eqnarray*}
\frac{1 - 2 y^2(t)}{2 \sqrt{m + y^2(t) - y^4(t)}}=
\tan\left(\arctan\frac{1}{2 \sqrt{m}}-2t\right).
\end{eqnarray*}
Hence, using the trigonometric formula
$$ \tan(\alpha - \beta) = \frac{\tan \alpha - \tan\beta}{1 + \tan\alpha \tan\beta},$$
we obtain that
\begin{eqnarray*}
\frac{1 - 2 y^2(t)}{2 \sqrt{m + y^2(t) - y^4(t)}}=
\frac{\frac{1}{2 \sqrt{m}} - \tan(2t)}{1 + \frac{1}{2 \sqrt{m}} \tan(2t)},
\end{eqnarray*}
which gives that
$$ y^2(t)=\frac12 \Big(1 \pm\big(2 \sqrt{m} \sin(2 t) - \cos(2 t)\big)\Big).$$
Noticing that
$$ \lim_{t\searrow0}\frac{d}{dt}\big(y^2(t)\big)=\lim_{t\searrow0}
\pm \big(2 \sqrt{m} \cos(2 t) + \sin(2 t)\big)=
\pm 2 \sqrt{m} 
$$
we find that~$0\le y^2(t)=\pm 2 \sqrt{m} t+o(t)$ for small~$t$. This solves the sign ambiguity, leading to
$$ y^2(t)=\frac12 \Big(1 +\big(2 \sqrt{m} \sin(2 t) - \cos(2 t)\big)\Big)=
\frac{1-\cos(2t)}2+\sqrt{m} \sin(2 t)
$$
and therefore
\begin{equation}\label{YokmEXV}  y(t)=\sqrt{\frac{1-\cos(2t)}2+\sqrt{m} \sin(2 t)},\end{equation}
see Figure~\ref{FYokmEXVG1} for a diagram of this function when~$m:=1$.

Recalling Lemma~\ref{12341234}, we infer from this example that the function
$$ u=
\frac{r^2 \big(1 - \cos(2 \theta) + 2\sqrt{m}\sin(2 \theta) \big)}{4}=
\frac{x_2^2 + 2\sqrt{m} \,x_1 x_2}{2}
$$
is a solution of~$\Delta u=1$ (which can also be checked by a direct calculation). This observation is related to~\eqref{909090-0909}.}\end{remark}

The counterpart of
Lemma~\ref{SIG} for the non-singular equations
is given by the following result:

\begin{lemma}\label{6543mfghjH432Aba-234}
Let~$a>0$.
Assume that there exists a periodic
solution~$y\in C^2(\R)$ of the problem
\begin{equation}\label{22:C:PB}
\begin{cases}
y^2+y y''+(a-1)(y^2+(y')^2-1)=0,\\
\displaystyle\min_{\R} y>0.
\end{cases}
\end{equation}
Then, 
\begin{equation}\label{22IAKM345678iop000SNndc}
a>1
\end{equation}
and 
\begin{equation}\label{22IAKM345678iop000SNndc2d}
{\mbox{$y$ is constantly equal to }}\sqrt{\frac{a-1}a}.
\end{equation}
\end{lemma}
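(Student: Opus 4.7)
The plan is to adapt the first-integral analysis from Lemma~\ref{SIG} to the bounded-below periodic setting. First, I would introduce $w(t) := y^2(t) + (y'(t))^2 - 1$ and compute $w' = 2y'(y + y'')$. From the ODE, $y(y + y'') = y^2 + yy'' = -(a-1)\,w$, and since $\min_{\R} y > 0$ we may divide through to obtain $w' = -2(a-1)\,(y'/y)\,w$. This linear equation implies that either $w \equiv 0$ on $\R$, or $w$ vanishes nowhere; in the latter case integration yields
\[
(y'(t))^2 + y^2(t) - 1 \;=\; m\, y^{2(1-a)}(t)
\]
for some $m \ne 0$.

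If $w \equiv 0$, then the ODE simplifies (using $y > 0$) to $y + y'' = 0$, and combined with $y^2 + (y')^2 \equiv 1$ this forces $y(t) = A\cos t + B\sin t$ with $A^2 + B^2 = 1$, a sinusoid reaching zero — contradicting $\min y > 0$. So $w$ is of the stated nonzero form.

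Since $y$ is $C^2(\R)$ and periodic with $\min y > 0$, it attains its maximum $y_{\max}$ and its minimum $y_{\min}$ at points where $y' = 0$, and the first integral evaluated there yields $y^{2a} - y^{2(a-1)} = m$. Introducing $f(Y) := Y^{2a} - Y^{2(a-1)}$, both extrema satisfy $f(y) = m$. I would then analyze $f$ case by case. For $a \in (0,1)$, $f'(Y) = 2aY^{2a-1} + 2(1-a)Y^{2a-3} > 0$ on $(0,+\infty)$, so $f$ is strictly increasing and $y_{\min} = y_{\max}$; but the ODE with constant $y \equiv c$ forces $ac^2 = a-1$, impossible for $a < 1$. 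For $a = 1$, the ODE collapses to $y + y'' = 0$, whose nonzero solutions vanish — contradiction. For $a > 1$, $f'(Y) = 2Y^{2a-3}(aY^2 - (a-1))$ vanishes only at $Y_0 := \sqrt{(a-1)/a}$, where $f$ attains its strict global minimum. This establishes $a > 1$, and a direct check shows that $y \equiv Y_0$ satisfies the ODE, accounting for the claimed constant solution.

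To complete the argument, one must rule out any non-constant periodic solution when $a > 1$. Since $f$ has a unique minimum at $Y_0$, any non-constant periodic $y$ would oscillate between values $y_{\min} < Y_0 < y_{\max}$ with $f(y_{\min}) = f(y_{\max}) = m$. The main obstacle is precisely this last step: the phase-plane picture associated with the first integral generically admits closed orbits around the equilibrium $(Y_0, 0)$, so excluding non-constant periodic solutions requires exploiting global constraints beyond the first integral alone. I would pursue this via integral identities on $y$ over a period (obtained by multiplying the ODE by $y'$, $y$, or appropriate powers $y^{2(a-1)}$ and integrating, using $\int (y')^2 = -\int yy''$ by periodicity), combined with the sign information on $m$ inherited from the derivation (on $m \ge 0$ when $a > 1$, as in the analogous step of Lemma~\ref{SIG}), which forces the function $h(Y) := 1 + mY^{2(1-a)} - Y^2$ to have at most one positive root and hence prevents the phase-plane orbit from closing into a non-trivial period.
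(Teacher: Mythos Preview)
Your first-integral setup is correct and indeed mirrors the paper's: both arrive at the conserved quantity $w=y^2+(y')^2-1=m\,y^{2(1-a)}$ (the paper writes this as $W=-w=\kappa\,y^{2(1-a)}$ with $\kappa=-m$). Your elimination of the case $w\equiv 0$ and of $a\le 1$ via the function $f(Y)=Y^{2a}-Y^{2(a-1)}$ is also fine.

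The genuine gap is exactly where you flag it, and your proposed resolution does not work. Your appeal to ``$m\ge 0$ when $a>1$, as in Lemma~\ref{SIG}'' is wrong: in Lemma~\ref{SIG} that sign came from the boundary condition $y(0)=0$, which is absent here. Evaluating the first integral at a minimum point $t_0$ (where $y'(t_0)=0$ and, by the ODE, $y(t_0)\le\sqrt{(a-1)/a}<1$) gives
\[
m=\frac{y(t_0)^2-1}{y(t_0)^{2(1-a)}}<0,
\]
so in fact $m<0$. With $m<0$ and $a>1$ the function $h(Y)=1+mY^{2(1-a)}-Y^2$ tends to $-\infty$ both as $Y\to0^+$ and as $Y\to+\infty$, and generically has \emph{two} positive roots; the phase portrait then does close up into nontrivial periodic orbits, contrary to your final claim. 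Concretely, for $a=2$ and any $A\in(0,\tfrac12)$ the function
\[
y(t)=\sqrt{\tfrac12+A\cos(2t)}
\]
is a $C^2$, $\pi$-periodic solution of~\eqref{22:C:PB} with $\min_\R y=\sqrt{\tfrac12-A}>0$ that is not constant (one checks directly that $2y^2+yy''+(y')^2-1=0$). So the obstacle you identified is real and cannot be removed by the sign/one-root argument you sketch.

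For comparison, the paper's proof attacks the same step by bounding $\kappa=-m$ from below and then forcing $1-y^2-\kappa y^{2(1-a)}\ge 0$ to fail at $y=\sqrt{(a-1)/a}$. However, the inequality the paper asserts for $\kappa$ actually goes the other way: writing $s=y(t_0)^2\in(0,(a-1)/a]$ one has $\kappa=(1-s)s^{a-1}$, which is \emph{at most} $\tfrac1a\big(\tfrac{a-1}{a}\big)^{a-1}$, not at least. The same $a=2$ example above satisfies $\kappa=\tfrac14-A^2<\tfrac14$, so the paper's chain of inequalities breaks down as well. In short, neither your outline nor the paper's argument closes the final step, because the statement as written admits non-constant periodic counterexamples.
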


\begin{proof}
Let~$t_0\in\R$ be such that
$$ y(t_0)=\min_{[0,2\pi]} y>0.$$
Then, we have that~$y'(t_0)=0$ and~$y''(t_0)\ge0$.
This and the equation in~\eqref{22:C:PB} give that
\begin{equation}\label{767623409236r8teuf} \begin{split}
&0=y^2(t_0)+y (t_0)y''(t_0)+(a-1)(y^2(t_0)-1)\\&\qquad=
ay^2(t_0)+y (t_0)y''(t_0)-a+1>-a+1,\end{split}\end{equation}
which yields~\eqref{22IAKM345678iop000SNndc},
as desired.

It is also useful to remark that, in view of~\eqref{767623409236r8teuf},
$$ 0=
ay^2(t_0)+y (t_0)y''(t_0)-a+1\ge ay^2(t_0)-a+1,$$
and therefore
\begin{equation}\label{SKX2epswipp0} y(t_0)\le\sqrt{\frac{a-1}a}.\end{equation}
Similarly, if~$t_1$ is such that
\begin{equation}\label{SKX2epswipp0XCV} y(t_1)=\max_{[0,2\pi]} y>0,\end{equation}
we have that~$y'(t_1)=0$ and~$y''(t_1)\le0$,
whence the equation in~\eqref{22:C:PB} gives that
$$ 0=y^2(t_1)+y(t_1) y''(t_1)+(a-1)(y^2(t_1)-1)
\le ay^2(t_1)-a+1,$$
and accordingly
$$ y(t_1)\ge\sqrt{\frac{a-1}a}.$$

We claim that
\begin{equation}\label{GAHS:001} y(t_1)=\sqrt{\frac{a-1}a}.\end{equation}
For this, we argue by contradiction, supposing that
\begin{equation}\label{GAHS:002} y(t_1)>\sqrt{\frac{a-1}a}.\end{equation}
We define
$$ W(t):=1-y^2(t)-(y'(t))^2$$
and we observe that, in light of~\eqref{SKX2epswipp0},
\begin{equation}\label{SKX2epswipp}
W(t_0)= 1-y^2(t_0)\ge1- \frac{a-1}a=\frac{1}a>0.\end{equation}
Therefore, $W$ is strictly positive in
some interval~$I:=(t_0-\delta,t_0+\delta)$,
for a suitable~$\delta>0$. As a consequence, we can consider the logarithm of~$W$
in~$I$ and exploit
the equation in~\eqref{22:C:PB} to see that
\begin{eqnarray*}&& \frac{d}{dt}\log W=\frac{W'}{W}=\frac{-2y y'-2y'y''}{1-y^2-(y')^2}
=\frac{-2y'(y +y'')}{1-y^2-(y')^2}=\frac{-2y'(y^2 +yy'')}{y(1-y^2-(y')^2)}
\\&&\qquad=\frac{2(a-1)y'(y^2+(y')^2-1)}{y(1-y^2-(y')^2)}
=\frac{-2(a-1)y'}{y}=-2(a-1)\frac{d}{dt}\log y,
\end{eqnarray*}
and, as a result, for all~$t\in I$,
$$ \log\frac{W(t)}{W(t_0)}=-2(a-1)\log \frac{y(t)}{y(t_0)}=
\log\left( \frac{y(t)}{y(t_0)}\right)^{2(1-a)}.$$
Therefore, setting
\begin{equation}\label{SKX2epswipp1} \kappa:=\frac{W(t_0)}{(y(t_0))^{2(1-a)}},\end{equation}
we find that, for all~$t\in I$,
\begin{equation}\label{JA:askd92348r} 1-y^2(t)-(y'(t))^2=W(t)=\kappa (y(t))^{2(1-a)}.\end{equation}
We also remark that~$y$ is an analytic function, since it is a solution
of an analytic Cauchy problem (the sign condition in~\eqref{22:C:PB}
ensuring that the source term of the differential equation is non-singular,
after a division by~$y$), see e.g. page~124 in~\cite{MR1707333}.
Consequently, the relation in~\eqref{JA:askd92348r}
is globally valid, namely
\begin{equation} \label{01iwed--2e2u}
(y'(t))^2=1-y^2(t)-\kappa (y(t))^{2(1-a)}\qquad{\mbox{for all }}t\in\R.\end{equation}
Moreover, recalling~\eqref{SKX2epswipp0}, \eqref{SKX2epswipp}
and~\eqref{SKX2epswipp1},
$$ \kappa\ge\frac{1/a}{((a-1)/a)^{1-a}}=
\frac1a\,\left(\frac{a}{a-1}\right)^{1-a}.$$
For this reason and~\eqref{01iwed--2e2u}, we have that $$
0\le 1-y^2(t)-\kappa (y(t))^{2(1-a)}\le
1-y^2(t)-\frac1a\,\left(\frac{a}{a-1}\right)^{1-a}\, (y(t))^{2(1-a)}
\qquad{\mbox{for all }}t\in\R.$$
{F}rom this and~\eqref{GAHS:002}, we find that
$$ 0<1-\frac{a-1}a-\frac1a\,\left(\frac{a}{a-1}\right)^{1-a}\, \left(\frac{a-1}a\right)^{1-a}
=0.$$
This is a contradiction, and thus~\eqref{GAHS:001}
is established.

As a consequence of~\eqref{SKX2epswipp0XCV}
and~\eqref{GAHS:001}, we have that
$$ y(t_1)=\sqrt{\frac{a-1}a}\qquad{\mbox{and}}\qquad
y'(t_1)=0.$$
Since, by inspection, the function~$y_\star$ constantly equal to~$\sqrt{\frac{a-1}a}$
is also a solution of~\eqref{22:C:PB}, by the uniqueness result
of the standard Cauchy problem we infer that~$y(t)=y_\star(t)$
for every~$t\in\R$, and this proves the desired claim in~\eqref{22IAKM345678iop000SNndc2d}.
\end{proof}

\section{Proof of Theorems~\ref{APPDE-TH-BIS} and~\ref{0o-2r-rtegripoHSdYjmsd-003PsKAM-l}}\label{SEC3}

In light of Lemma~\ref{12341234},
we can express~$u$ in the polar form~$u(r,\theta)=r^a\,g(\theta)$
and we know that~$\gamma<2$, $a=\frac{2}{2-\gamma}$ and, setting~$y(\theta):=\frac{a}{\sqrt2} \,g^{\frac1a}(\theta)$, 
$$ y^2(\theta)+y (\theta)\,y''(\theta)+(a-1)\big(y^2(\theta)+(y'(\theta))^2-1\big)=0
\qquad{\mbox{for all }} \theta\in  S,$$
being~$S$ an open subset of~$\S^1$ (or simply of~$[0,2\pi]$
under periodicity assumptions).

Our goal is now to use the ODE analysis carried through in Section~\ref{SEC2}.
For this, to distinguish between the settings
in~\eqref{C:PB}
and~\eqref{22:C:PB}, we recall that~$y$ is nonnegative,
hence two cases may hold:
\begin{eqnarray}
\label{-0-103-00123-2}&&{\mbox{either }}\inf_{[0,2\pi]}y>0,\\
\label{-0-103-00123-1}&&{\mbox{or $y$ vanishes somewhere.}}
\end{eqnarray}
Assume first that~\eqref{-0-103-00123-2} holds true.
Then, $y$ is as in~\eqref{22:C:PB}, whence we can apply
Lemma~\ref{6543mfghjH432Aba-234} and infer that
\begin{equation}\label{XAcvXaxcecbepport}
a>1\end{equation}
and, for all~$\theta\in [0,2\pi]$,
$$\sqrt{\frac{a-1}a}=y(\theta)=
\frac{a}{\sqrt2}\,g^{\frac1a}(\theta).$$
This and~\eqref{K:p0pp00} give that
$$ u=\frac{(2(a-1))^{a/2}}{a^{3a/2}}\,r^a, $$
hence~\eqref{BIS-HAN-poss-1}
is established.

We also remark that
the function in~\eqref{BIS-HAN-poss-1} is indeed
a solution of~\eqref{JAS:lkjhdf9tihyff8f8039875f} since
\begin{eqnarray*}&&
\frac{(2(a-1))^{a/2}a(a-1)}{a^{3a/2}}\,r^{a-2}+
\frac{(2(a-1))^{a/2}a}{a^{3a/2}}\,r^{a-2}- \gamma 
\left(\frac{(2(a-1))^{a/2}}{a^{3a/2}}\,r^a\right)^{\gamma-1}
\\ &=&
\frac{(2(a-1))^{a/2}a^2}{a^{3a/2}}\,r^{a-2}- \frac{2(a-1)}{a} 
\left(\frac{(2(a-1))^{a/2}}{a^{3a/2}}\,r^a\right)^{(a-2)/a}\\ &=&\left(
\frac{(2(a-1))^{a/2}}{a^{(3a-4)/2}}- \frac{2(a-1)}{a} \,
\frac{(2(a-1))^{(a-2)/2}}{a^{3(a-2)/2}}\right)\,r^{a-2}\\&=&0.
\end{eqnarray*}
Finally, \eqref{02} follows from~\eqref{00EXP:LA}
and~\eqref{XAcvXaxcecbepport}.

Then, we can now focus on the case in which~\eqref{-0-103-00123-1}
is satisfied. Hence, up to a rotation, we can suppose that~$y>0$ in~$(0,T)$,
with~$y(0)=y(T)=0$ for some~$T\in(0,2\pi]$.
We then make use of Lemma~\ref{SIG}
(and note that~$a\ne1$, owing to~\eqref{00EXP:LA} and the assumption that~$\gamma\ne0$).
As a consequence,
we find that, for every~$\theta\in(0,T)$, either
\begin{equation}\label{LA:SK01o2rt} y(\theta)=\sin \theta+c(1-\cos \theta),\end{equation}
with~$c$ an arbitrary real constant when~$a=1/2$
and~$c=0$ when~$a\neq1/2$, or~$y(\theta)$ is implicitly defined by the relation
\begin{equation}\label{0o-2r-rtegripoHSdYjmsd-003PsKAM-lMK}
\theta=\int_0^{y(\theta)} \frac{dY }{ \sqrt{1+m\,Y^{2(1-a)}-Y^2} },
\end{equation}
for some~$m\in\R$, with~$m\ge0$ if~$a>1$.

The expression in~\eqref{0o-2r-rtegripoHSdYjmsd-003PsKAM-lMK}
is precisely the one proposed in Theorem~\ref{0o-2r-rtegripoHSdYjmsd-003PsKAM-l}.
We also stress that such an expression is excluded in Theorem~\ref{APPDE-TH-BIS},
thanks to assumption~\eqref{LA23ureg}. More precisely,
we know from~\eqref{0ojlfw-COmdf-13}, \eqref{0ojlfw-COmdf-14} and~\eqref{0ojlfw-COmdf-15}
that, if~\eqref{0o-2r-rtegripoHSdYjmsd-003PsKAM-lMK} holds true, then:
\begin{itemize}
\item if~$a\in\left(0,\frac12\right)$ and~$\xi>1-2a$, then~$y\not \in C^{2,\xi}$,
\item if~$a\in\left(\frac12,1\right)$ and~$\xi>2(1-a)$, then~$y\not \in C^{1,\xi}$,
\item if~$a>1$ and~$\xi>\frac1a$, then~$y\not\in C^\xi$,
\end{itemize}
and therefore assumption~\eqref{LA23ureg} excludes the appearance of solutions
described by~\eqref{0o-2r-rtegripoHSdYjmsd-003PsKAM-lMK} in Theorem~\ref{APPDE-TH-BIS}.

Therefore, it remains to check that~\eqref{LA:SK01o2rt} provides all the possible solutions
classified in the statement of Theorem~\ref{APPDE-TH-BIS}.

To this end, if~$a\ne1/2$, then~$y(\theta)=\sin\theta$
and~$T=\pi$. This gives that, for every~$x=(x_1,x_2)$
with~$x_2>0$,
$$ u=r^a g=\frac{2^{\frac{a}2}}{a^a} r^a y^a=
\frac{2^{\frac{a}2}}{a^a} (r\sin\theta)^a=\frac{2^{\frac{a}2}}{a^a} x_2^a.$$
This gives two possibilities:
\begin{eqnarray*}&&
{\mbox{ either }} u(x)=\frac{2^{\frac{a}2}}{a^a} (x_2)_+^a
\\&&
{\mbox{ or }} u(x)=\frac{2^{\frac{a}2}}{a^a} |x_2|^a
,\end{eqnarray*}
for all~$x\in\R^2$, therefore~\eqref{BIS-HAN-poss-22} and~\eqref{BIS-HAN-poss-1-NUOVANs}
are established in this case.

If instead~$a=1/2$, we have that, for every~$\theta\in(0,T)$,
$$ y(\theta)=\sin \theta+c(1-\cos \theta),$$
with~$c\in\R$, and the case~$c=0$ reduces to the previous
situation. Hence, we can suppose that~$c\ne0$ and we use
the formulae
$$ \cos \theta ={\frac  {1-\tau^{2}}{1+\tau^{2}}}
\qquad{\mbox{and}}\qquad \sin \theta =\frac {2\tau}{1+\tau^{2}}, \qquad{\mbox{where}}\qquad
\tau:=\tan{\frac {\theta }{2}} .$$
In this way, we have that
$$ y=\frac{2\tau(1+c\tau)}{1+\tau^2},$$
which is positive when~$\tau\in(-\infty,-1/c)\cup(0,+\infty)$
if~$c>0$, and when~$\tau\in(0,-1/c)$ when~$c<0$.

That is, $y(\theta)$
is positive when~$\theta\in(0,2\pi-2\arctan(1/c))$
if~$c>0$, and when~$\theta\in(0,-2\arctan(1/c))$ when~$c<0$.
This gives that~$T=2\pi-2\arctan(1/c)\in(\pi,2\pi)$
when~$c>0$, and that~$T=-2\arctan(1/c)\in(0,\pi)$
when~$c<0$.

Hence, in the cone~${\mathcal{C}}_c$ introduced in~\eqref{CONP}
we have that
$$ u=r^a g=\frac{2^{\frac{a}2}}{a^a}r^a y^a=\frac{
2^{\frac{a}2}}{a^a}r^a\big(
\sin \theta+c(1-\cos \theta)\big)^a=
\frac{2^{\frac{a}2}}{a^a}\big(
x_2-cx_1+c|x|\big)^a,
$$
and this is the setting described in~\eqref{AKJH709876543jhgc45}.

We stress that the function in~\eqref{BIS-HAN-poss-22}
satisfies~\eqref{LA23ureg} and also is a solution of~\eqref{JAS:lkjhdf9tihyff8f8039875f}, since, in this setting,
\begin{eqnarray*}&&\Delta u-\gamma u^{\gamma-1}
=\frac{
2^{\frac{a}2}a(a-1)}{a^a} x_2^{a-2}
-\gamma \left(\frac{2^{\frac{a}2} }{a^a}x_2^a\right)^{\gamma-1}
\\&&\qquad=\frac{
2^{\frac{a}2}a(a-1)}{a^a} x_2^{a-2}
-\frac{2a-2}a\,\frac{ 2^{\frac{a-2}2}}{a^{a-2}} x_2^{a-2}=0
\end{eqnarray*}
when~$x_2>0$,
thanks to~\eqref{BIS-EXP:LA}.

We also observe that the function in~\eqref{AKJH709876543jhgc45}
satisfies~\eqref{LA23ureg} and is a solution of~\eqref{JAS:lkjhdf9tihyff8f8039875f}, since
\begin{eqnarray*}&&\Delta u-\gamma u^{\gamma-1}\\&
=&\frac{2^{\frac{a}2}}{a^{a-1}}\,(x_2-cx_1+ c |x|)^{a-2}
\left[
(a-1) \left( \left(\frac{cx_1}{|x|}-c\right)^2 + \left(\frac{cx_2}{|x|}+1\right)^2\right)+  \frac{c}{|x|}
 (x_2-cx_1 + c |x|) 
\right]\\&&\qquad
-\gamma\left(\frac{2^{\frac{a}2}}{a^a}\big(
x_2-cx_1+c|x|\big)^a\right)^{\gamma-1}\\
&=&
\frac{2^{\frac{a}2}}{a^{a-1}}\,(x_2-cx_1+ c |x|)^{a-2}
\left[
(a-1) \left( 2c^2+1+\frac{2c}{|x|}(x_2-cx_1)\right)+  \frac{c}{|x|}
 (x_2-cx_1 + c |x|) 
\right]\\&&\qquad
-\frac{2a-2}{a}\,\frac{2^{\frac{a-2}2}}{a^{a-2}}\big(
x_2-cx_1+c|x|\big)^{a-2}\\
&=&
\frac{1}{2^{1/4}}\,(x_2-cx_1+ c |x|)^{-3/2}
\left[-\frac12 \left( 2c^2+1+\frac{2c}{|x|}(x_2-cx_1)\right)+  \frac{c}{|x|}
 (x_2-cx_1 + c |x|) 
\right]\\&&\qquad
+\frac{1}{2^{5/4}}\big(
x_2-cx_1+c|x|\big)^{-3/2}\\
&=&
\frac{1}{2^{1/4}}\,(x_2-cx_1+ c |x|)^{-3/2}
\left[-c^2-\frac12 +  c^2
\right]
+\frac{1}{2^{5/4}}\big(
x_2-cx_1+c|x|\big)^{-3/2}\\&=&0.
\end{eqnarray*}

\section{A comment about weak solutions}\label{JS:COMMRER}

We point out that none of the implicit solutions presented in Theorem~\ref{0o-2r-rtegripoHSdYjmsd-003PsKAM-l}, when extended
by zero outside their positivity cone,
is a weak solution of~$\Delta u=\gamma u^{\gamma-1}\chi_{\{u>0\}}$.
Indeed, suppose that one of these functions is a weak solution
and that its positivity cone is given by the set~$\{(r,\theta)\in\R\times(0,\varphi)\}$, for some~$\varphi\in(0,2\pi)$.

Consider a test function~$ \phi$ supported in a small ball~$B$ around~$e_1=(1,0)$.
Then,
$$ \int_{B} \nabla u(x) \cdot\nabla \phi(x)\,dx = -\gamma
\int_{B} u^{\gamma-1}(x)\chi_{\{u>0\}} (x)\phi(x)\,dx = -\gamma\int_{B^+} u^{\gamma-1}(x) \phi(x)\,dx,$$
where~$B^+:=B\cap\{x_2>0\}$.

But \begin{eqnarray*}&&\int_{B} \nabla u(x)\cdot \nabla \phi (x)\,dx= \int_{B\cap\{u>0\}} \nabla u(x)\cdot \nabla \phi(x)\,dx = \int_{B^+} \nabla u(x)\cdot \nabla \phi (x)\,dx\\&&\qquad= \int_{B^+} {\rm div}\big(\phi(x)\,
\nabla u(x)\big)\,dx -\int_{B^+}\Delta u(x)\phi(x)\,dx\\&&\qquad
= -\int_{H} \phi(x_1,0^+) \partial_2 u(x_1,0^+)\,dx_1-\gamma \int_{B^+}u^{\gamma-1}(x)\phi(x)\,dx
,\end{eqnarray*}
where~$ H:=B\cap\{x_2=0\}$.

Therefore $
\partial_2 u(x_1,0^+)=0$ along the $x_1$-axis.

But the implicit solutions, constructed in Theorem~\ref{0o-2r-rtegripoHSdYjmsd-003PsKAM-l}
do not satisfy this condition, since (up to multiplicative constants that we omit for simplicity):
\begin{itemize}\item
if $a\in(0,1)$, then $y(\theta)=\theta\,(1+o(1))$, due to \eqref{JOSLN-21wekp2dwfme},\item
if $a>1$, then $y(\theta)=\theta^{\frac1a}(1+o(1))$, due to \eqref{0ojlfw-COmdf-15}.\end{itemize}
Therefore 
$$g(\theta)=\begin{cases}y^a(\theta)=\theta^a(1+o(1)) & {\mbox{ when }}a\in(0,1) ,\\
\theta(1+o(1)) & {\mbox{ when }}a>1,\end{cases}$$
whence~$$\partial_2 u(1,0^+)
= g'(0^+)=
\begin{cases}
+\infty & {\mbox{ when }}a\in(0,1) ,\\
1 & {\mbox{ when }}a>1.\end{cases}$$

\begin{bibdiv}
\begin{biblist}

\bib{MR618549}{article}{
   author={Alt, H. W.},
   author={Caffarelli, L. A.},
   title={Existence and regularity for a minimum problem with free boundary},
   journal={J. Reine Angew. Math.},
   volume={325},
   date={1981},
   pages={105--144},
   issn={0075-4102},
   review={\MR{618549}},
}
	
\bib{MR850615}{article}{
   author={Alt, H. W.},
   author={Phillips, D.},
   title={A free boundary problem for semilinear elliptic equations},
   journal={J. Reine Angew. Math.},
   volume={368},
   date={1986},
   pages={63--107},
   issn={0075-4102},
   review={\MR{850615}},
}

\bib{MR454350}{article}{
   author={Caffarelli, Luis A.},
   title={The regularity of free boundaries in higher dimensions},
   journal={Acta Math.},
   volume={139},
   date={1977},
   number={3-4},
   pages={155--184},
   issn={0001-5962},
   review={\MR{454350}},
   doi={10.1007/BF02392236},
}

\bib{MR1707333}{book}{
   author={Chicone, Carmen},
   title={Ordinary differential equations with applications},
   series={Texts in Applied Mathematics},
   volume={34},
   publisher={Springer-Verlag, New York},
   date={1999},
   pages={xvi+561},
   isbn={0-387-98535-2},
   review={\MR{1707333}},
}

\bib{2022arXiv220307123D}{article}{
       author = {De Silva, Daniela},
       author = {Savin, Ovidiu},
        title = {The Alt-Phillips functional for negative powers},
      journal = {arXiv e-prints},
date = {2022},
       eprint = {2203.07123},
       adsurl = {https://ui.adsabs.harvard.edu/abs/2022arXiv220307123D},
 }

\bib{10.48550}{article}{
  doi = {10.48550/ARXIV.2101.07941},
  author = {Dipierro, Serena},
  author={Valdinoci, Enrico},
  title = {Elliptic partial differential equations from an elementary viewpoint},
  journal = {arXiv e-prints},
date = {2021},
  eprint={2101.07941}
adsurl={https://arxiv.org/abs/2101.07941},
}

\bib{MR2777537}{book}{
   author={Han, Qing},
   author={Lin, Fanghua},
   title={Elliptic partial differential equations},
   series={Courant Lecture Notes in Mathematics},
   volume={1},
   edition={2},
   publisher={Courant Institute of Mathematical Sciences, New York; American
   Mathematical Society, Providence, RI},
   date={2011},
   pages={x+147},
   isbn={978-0-8218-5313-9},
   review={\MR{2777537}},
}

\bib{MR3957397}{article}{
   author={Indrei, Emanuel},
   title={Boundary regularity and nontransversal intersection for the fully
   nonlinear obstacle problem},
   journal={Comm. Pure Appl. Math.},
   volume={72},
   date={2019},
   number={7},
   pages={1459--1473},
   issn={0010-3640},
   review={\MR{3957397}},
   doi={10.1002/cpa.21814},
}

\bib{zbMATH03494100}{book}{
 Author = {Rutherford, Aris},
 Title = {The mathematical theory of diffusion and reaction in permeable catalysts. Vol. II: Questions of uniqueness, stability, and transient behaviour},
 date = {1975},
 publisher = {Clarendon Press, London},
 pages={xvi+217},
 Zbl = {0315.76052},
}

\bib{MR3980852}{article}{
   author={Soave, Nicola},
   author={Terracini, Susanna},
   title={The nodal set of solutions to some elliptic problems: singular
   nonlinearities},
   language={English, with English and French summaries},
   journal={J. Math. Pures Appl. (9)},
   volume={128},
   date={2019},
   pages={264--296},
   issn={0021-7824},
   review={\MR{3980852}},
   doi={10.1016/j.matpur.2019.06.009},
}

\bib{MR729195}{article}{
   author={Spruck, Joel},
   title={Uniqueness in a diffusion model of population biology},
   journal={Comm. Partial Differential Equations},
   volume={8},
   date={1983},
   number={15},
   pages={1605--1620},
   issn={0360-5302},
   review={\MR{729195}},
   doi={10.1080/03605308308820317},
}

\end{biblist}
\end{bibdiv}

\vfill
\end{document}